\numberwithin{equation}{section}
\newtheorem{thm}[equation]{Theorem}
\newtheorem*{thm*}{Theorem}
\newtheorem{prop}[equation]{Proposition}
\newtheorem*{remark*}{Remark}
\newtheorem{lemma}[equation]{Lemma}
\theoremstyle{definition}
\theoremstyle{remark}
\newtheorem{remark}[equation]{Remark}
\newcommand{\thmref}[1]{Theo\-rem \ref{#1}}
\newcommand{\lemref}[1]{Lem\-ma \ref{#1}}
\newcommand{\figref}[1]{Fig\-ure \ref{#1}}
\DeclareMathOperator{\Hom}{hom }
\DeclareMathOperator{\End}{End }
\DeclareMathOperator{\Adj}{Adj }
\DeclareMathOperator{\GL}{GL}
\DeclareMathOperator{\Aut}{Aut}
\DeclareMathOperator{\Isom}{Isom}
\DeclareMathOperator{\Pseudo}{\Psi\hspace*{-0.7mm}\Isom}
\DeclareMathOperator{\Stab}{Stab}
\DeclareMathOperator{\rank}{rank }
\newcommand{\M}{\mathbb{M}}
\newcommand{\pseudo}{\Psi\hspace*{-1mm}\Isom}
\renewcommand{\leq}{\leqslant}
\renewcommand{\bar}{\overline}
\renewcommand{\phi}{\varphi}
\begin{document}

\title{Groups acting on tensor products}
\author{Peter A. Brooksbank}
\address{
	Department of Mathematics\\
	Bucknell University\\
	Lewisburg, PA 17837
}
\email{pbrooksb@bucknell.edu}
\author{James B. Wilson}
\address{
	Department of Mathematics\\
	Colorado State University\\
	Fort Collins, CO 80523\\
}
\email{jwilson@math.colostate.edu}
\date{\today}
\keywords{tensor product, bilinear map, autotopism, pseudo-isometry}
\begin{abstract}
Groups preserving a distributive product are encountered often in algebra.  Examples 
include automorphism groups of associative and nonassociative rings, classical groups, 
and automorphism groups of $p$-groups.  
While the great variety of such products precludes any realistic hope of describing
the general structure of the groups that preserve them, it is reasonable to expect
that insight may be gained from an examination of the universal distributive
products: tensor products. We give a detailed description of the groups
preserving tensor products over semisimple and semiprimary rings, and present
effective algorithms to construct generators for these groups.
We also discuss applications of our methods to algorithmic problems for which
all currently known methods require an exponential amount of work.
\end{abstract}

\maketitle


\section{Introduction}
\label{sec:intro}
Many groups can be described  as the set of linear transformations that preserve
a distributive product.  Obviously this is the case for automorphism groups of algebras, both associative 
and nonassociative.  Among the other well known examples are classical groups, which preserve other 
forms of distributive products, namely reflexive forms; cf. \cite{Artin}*{p. 107}.  
In more subtle ways, automorphisms of 
finite $p$-groups preserve a distributive product
that arises from commutation, via the correspondences of Baer~\cite{Baer:class2}, and of Kaloujnine, Lazard, and Mal'cev 
\cite{Warfield:nil}.  
As the estimates for rings \cite{Neretin}, and for $p$-groups \cite{Higman:enum}, suggest,
however, there are simply too many products to have any realistic expectation of understanding 
the structure of all such groups.   

The goal of this paper is to examine 
the groups preserving tensor products.  
As  tensor products are universal products, 
the structure of these groups informs us of overall structure of groups preserving distributive products.
Tensor products over central simple rings have already been studied in 
\cite{LW:invariants}*{Theorems 3.6 \& 4.1}.  Here we consider tensor products over semiprimary rings
(rings $R$ whose Jacobson radical $J(R)$ is nilpotent and whose quotient $R/J(R)$ is semisimple).
This case is surprisingly complicated because the presence of a nontrivial Jacobson radical.  In many ways
the results and essential points are related to automorphisms of rings of 
strictly lower triangular matrices over arbitrary rings, as studied in \citelist{\cite{Lev:aut-nil}\cite{KL:aut-nil}}.

We begin with a general distributive product $\circ:U\times V\to W$ between abelian groups $U$, $V$, and $W$,
also known as a biadditive or bilinear map, or just {\em bimap}.  
For simplicity we assume that bimaps are {\em full} in that 
$W=U\circ V=\langle u\circ v\colon u\in U,v\in V\rangle$.
An {\em autotopism} of 
a bimap $\circ$
is a triple $(f,g;h)$ in
$\Aut(U) \times \Aut(V)\times\Aut(W)$ satisfying
\begin{align}\label{def:auto}
    (\forall & u\in U, \forall v\in V) &  uf\circ gv & = (u\circ v)^h.
 \end{align}
Our notation accommodates the introduction of left and right scalars, so our homomorphisms are evaluated 
on the right (resp. left) for left modules (resp. right modules), and exponentially for bimodules.

We are principally interested in describing
$\Aut(\circ)$, the group of all autotopisms of a bimap $\circ$. 
Our approach relies on the 
{\em ring of adjoints} of $\circ$, defined as
\begin{align}\label{def:adj}
	\Adj(\circ) & = \{ (f,g)\in \End(U)\times\End(V)^{\rm op}: uf\circ v=u\circ gv\}.
\end{align}
This ring was introduced in \cite{Wilson:unique-cent}*{p.~2654} and is characterized as the largest
ring $R$ acting faithfully on $U$ and $V$ for which $\circ\colon U\times V\to W$ factors through 
$U\otimes_{R} V$.
We show that tensor products and adjoint rings are Galois connected (\thmref{thm:Galois}),
and use this connection to prove the following result.

\begin{thm}\label{thm:main1}
The autotopism group of a bimap $\circ\colon U \times V\to W$ embeds in
\begin{align*}
	N(\Adj(\circ)) & =\{(f,g)\in \Aut(U)\times \Aut(V): \Adj(\circ)^{(f,g)}=\Adj(\circ)\},
\end{align*}
with equality precisely when $\circ$ is a tensor product. The latter condition holds if, and only if,
the uniquely induced homomorphism $U\otimes_{\Adj(\circ)} V\to W$ is an isomorphism.
\end{thm}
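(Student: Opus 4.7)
My plan is to split the theorem into an embedding statement, the surjectivity of that embedding when $\circ$ is a tensor product, and the converse direction; the final equivalence with $U \otimes_{\Adj(\circ)} V \to W$ being an isomorphism is essentially the definition of ``tensor product'' in this setting.

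\textbf{Embedding.} Given an autotopism $(f,g;h)$ and $(a,b) \in \Adj(\circ)$, I would verify $(f^{-1}af,\, gbg^{-1}) \in \Adj(\circ)$ by a chain of substitutions. Rewrite $u(f^{-1}af) \circ v = ((uf^{-1})a)f \circ v$, then apply the autotopism relation (with $g^{-1}v$ in place of $gv$) to convert the outer $f$ into an $h$ on the right, use $(a,b) \in \Adj(\circ)$ to swap $a$ for $b$, and finally apply the autotopism in reverse to unwind $h$, arriving at $u \circ (gbg^{-1})v$. This produces a map $\Aut(\circ) \to N(\Adj(\circ))$, and injectivity is immediate from fullness: any two autotopisms sharing $(f,g)$ agree on the generators $u \circ v$ of $W$ via $uf \circ gv$.

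\textbf{Surjectivity when $\circ$ is a tensor product.} Identify $W$ with $U \otimes_R V$ for $R = \Adj(\circ)$, so that $u \circ v = u \otimes v$. For $(f,g) \in N(R)$, the rule $(u \otimes v)^h = uf \otimes gv$ is forced; well-definedness reduces to checking $(ur)f \otimes gv = uf \otimes g(rv)$ for each $r \in R$. Rewriting $urf = uf \cdot (f^{-1}rf)$ and using $(f^{-1}rf,\, grg^{-1}) \in R$ (the normalizer hypothesis) together with the $R$-balancing relation gives exactly this. Applying the construction to $(f^{-1}, g^{-1})$ yields $h^{-1}$, so $h$ is an automorphism of $W$ and $(f,g;h) \in \Aut(\circ)$.

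\textbf{Converse.} Suppose $\Aut(\circ) = N(R)$. Let $\tilde\circ \colon U \times V \to U \otimes_R V$ be the universal bimap. By the Galois correspondence of \thmref{thm:Galois}, $\Adj(\tilde\circ) = R$, and the previous step gives $\Aut(\tilde\circ) = N(R)$. The canonical surjection $\pi \colon U \otimes_R V \to W$ intertwines corresponding autotopisms by $\pi \tilde h = h \pi$ for each $(f,g) \in N(R)$, so $\ker \pi$ is stable under every $\tilde h_{f,g}$. I would then return to the Galois connection to argue that no nonzero $N(R)$-invariant submodule of $U \otimes_R V$ can yield a quotient bimap whose adjoint ring is still $R$, forcing $\ker \pi = 0$. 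The final clause of the theorem is then immediate from the universal property of $U \otimes_R V$.

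The main obstacle is the converse step: while the embedding is bookkeeping and the surjectivity half follows from the universal property, the converse requires combining $N(R)$-invariance of $\ker \pi$ with the maximality of $R = \Adj(\circ)$ to conclude $\ker \pi = 0$, and this is precisely where the Galois connection of \thmref{thm:Galois} does the heavy lifting.
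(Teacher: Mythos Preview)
Your embedding computation and your construction of $h$ in the ``tensor product $\Rightarrow$ equality'' direction are exactly the paper's proof: this is precisely \thmref{thm:autotope}(i)--(ii). Your treatment of the final equivalence (``$\circ$ is a tensor product iff the induced map $U\otimes_{\Adj(\circ)}V\to W$ is an isomorphism'') also matches the paper, which deduces it directly from the Galois connection (\thmref{thm:Galois}): being a tensor product means $\circ\leftrightarrow\otimes_S$ for some $S$, and the closure identity $\otimes_{\Adj(\otimes_S)}\leftrightarrow\otimes_S$ then forces $\circ\leftrightarrow\otimes_{\Adj(\circ)}$.

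Your converse argument, however, has a real gap. You correctly observe that $\ker\pi$ is invariant under every $\tilde h_{(f,g)}$ with $(f,g)\in N(R)$, but the claim that ``no nonzero $N(R)$-invariant submodule of $U\otimes_R V$ can yield a quotient bimap whose adjoint ring is still $R$'' does not follow from the Galois connection. The Galois connection only guarantees $\Adj(\circ)\supseteq R$ whenever $\circ$ factors through $\otimes_R$; it provides no mechanism for forcing that containment to be \emph{strict} when $\ker\pi\neq 0$. That would require some irreducibility or transitivity property of the $N(R)$-action on $U\otimes_R V$, which is an independent and nontrivial assertion you have not established. It is worth noting that the paper's own proof of \thmref{thm:main1} does not address this direction either: it cites only \thmref{thm:autotope}(i) and \thmref{thm:Galois}, and the ``equality $\Rightarrow$ tensor product'' implication is left implicit. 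So you have matched the paper on everything it actually argues, and the one place you overreach is in attempting a direction the paper itself does not prove.
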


In several applications, such as constructing automorphism groups of finite $p$-groups, 
the bimaps $\circ$ that arise are endowed with natural symmetry, and one is primarily
interested in special types of autotopisms called {\em pseudo-isometries}; 
see \cite{Wilson:unique-cent}*{p.~2650}.
We therefore consider alternating and symmetric bimaps  $\circ$, and study the group
of all pseudo-isometries of $\circ$, defined as
\begin{align}\label{def:pseudo}
	   \Pseudo(\circ)  & = \{ f \colon (f,g;h) \in \Aut(\circ) \textnormal{ and } f=g \}.
\end{align}
We prove the following result.

\begin{thm}\label{thm:main2}
For an arbitrary nondegenerate alternating or symmetric bimap $\circ\colon V\times V\to W$, the group
$\Pseudo(\circ)$ embeds naturally in 
\begin{align*}
	N^*(\Adj(\circ)) & = \{ f \colon (f,g)\in N(\Adj(\circ))\textnormal{ and } f=g \},
\end{align*}
with equality precisely when $\circ$ is a symmetric or exterior tensor product. The latter
condition holds if, and only if, the map $V\wedge^{\pm}_{\Adj(\circ)}V\to W$ is an isomorphism.
\end{thm}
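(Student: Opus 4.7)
The plan is to reduce to Theorem~\ref{thm:main1} by restricting along the ``symmetric diagonal'' $f=g$, and then to invoke the universal property of $V \wedge^{\pm}_{\Adj(\circ)} V$ to accommodate the extra symmetry constraint.

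For the embedding I apply Theorem~\ref{thm:main1} to $\circ \colon V \times V \to W$: nondegeneracy makes the third component $h$ in any autotopism triple $(f,g;h)$ uniquely determined by $(f,g)$, so the embedding of $\Aut(\circ)$ into $N(\Adj(\circ))$ is injective. Restricting to triples with $f=g$ yields $\Pseudo(\circ) \hookrightarrow N^*(\Adj(\circ))$ directly from the definitions of $\Pseudo$ and $N^*$.

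For the sufficient direction, assume the canonical map $V \wedge^{\pm}_{\Adj(\circ)} V \to W$ is an isomorphism. Given $f \in N^*(\Adj(\circ))$, the pair $(f,f)$ lies in $N(\Adj(\circ))$, so the universal property of $\otimes_{\Adj(\circ)}$ produces an automorphism $\tilde f$ of $V \otimes_{\Adj(\circ)} V$ defined by $v \otimes v' \mapsto vf \otimes v'f$. Because the same $f$ acts on both factors, $\tilde f$ is compatible with the relations defining the symmetric or exterior quotient (for instance it sends $v \otimes v$ to $vf \otimes vf$ in the alternating case, and preserves $v \otimes v' - v' \otimes v$ in the symmetric case), and hence descends to an automorphism of $V \wedge^{\pm}_{\Adj(\circ)} V$. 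Transporting along the hypothesized isomorphism with $W$ supplies $h \in \Aut(W)$ such that $(f,f;h)$ is an autotopism, so $f \in \Pseudo(\circ)$.

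For the converse, suppose $\Pseudo(\circ) = N^*(\Adj(\circ))$, and consider the canonical symmetric/exterior bimap $\bullet \colon V \times V \to V \wedge^{\pm}_{\Adj(\circ)} V$. By the Galois connection (\thmref{thm:Galois}), $\Adj(\bullet) = \Adj(\circ)$, and by the direction just established $\Pseudo(\bullet) = N^*(\Adj(\circ))$. The canonical surjection $\pi \colon V \wedge^{\pm}_{\Adj(\circ)} V \to W$ is equivariant for the common $\Pseudo$-action, and following the template of Theorem~\ref{thm:main1}'s converse, the coincidence of the $\Pseudo$-groups forces $\pi$ to be injective. I expect this last step to be the main obstacle: adapting the Galois-connection argument to $\wedge^{\pm}$ requires checking that the pairing between bimaps and rings of scalars descends cleanly through the symmetric/exterior quotient, and that the natural involution on $\Adj(\circ)$ induced by the symmetry of $\circ$ (swapping left and right adjoints) is respected at every stage of the descent.
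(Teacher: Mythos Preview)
Your embedding argument and your ``sufficient direction'' (if the canonical map $V\wedge^{\pm}_{\Adj(\circ)}V\to W$ is an isomorphism, then $\Pseudo(\circ)=N^*(\Adj(\circ))$) are correct and match the paper's proof: both restrict \thmref{thm:autotope} to the diagonal $f=g$ and check that the induced automorphism of $V\otimes_{\Adj(\circ)}V$ descends through the symmetric or exterior relations. One small correction: the uniqueness of $h$ in an autotopism triple $(f,g;h)$ comes from $\circ$ being \emph{full}, not from nondegeneracy; nondegeneracy is what makes $\Adj(\circ)$ a $*$-ring.

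The genuine gap is in your converse. You appeal to ``the template of Theorem~\ref{thm:main1}'s converse,'' but no such argument exists in the paper: the proof of \thmref{thm:main1} establishes (a) the embedding via \thmref{thm:autotope}(i), (b) equality in the tensor-product case via \thmref{thm:autotope}(ii), and (c) the Galois-connection characterization of tensor products via \thmref{thm:Galois}; it does \emph{not} argue that equality of $\Aut(\circ)$ with $N(\Adj(\circ))$ forces $\circ$ to be a tensor product. The paper's proof of \thmref{thm:main2} follows the same pattern and likewise omits this direction. More to the point, your claim that ``the coincidence of the $\Pseudo$-groups forces $\pi$ to be injective'' does not follow from what you wrote. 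Since $\Pseudo(\circ)$ consists exactly of those $f\in\Pseudo(\bullet)$ whose induced action on $V\wedge^{\pm}_{\Adj(\circ)}V$ preserves $\ker\pi$, the equality $\Pseudo(\circ)=\Pseudo(\bullet)$ says only that $\ker\pi$ is an $N^*(\Adj(\circ))$-invariant subspace, and nothing in your sketch rules out a nonzero such subspace. The obstacle you anticipate is therefore not a matter of checking that the Galois connection ``descends cleanly''; the underlying implication itself is unsupported. You are attempting to prove a direction that the paper's own proof does not address.
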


From our point of view, the crucial aspect of Theorems~\ref{thm:main1} and~\ref{thm:main2} is
that $\Aut(\circ)$ and $\Pseudo(\circ)$ are shown to act on a known associative, unital ring,
which is easy to construct algorithmically~\citelist{\cite{BW:find-isom}*{Section 4}\cite{BW:slope}}.
So much more is known about the structure of rings than of general distributive 
products, and even basic features, such as the Jacobson radical and simple factors of $\Adj(\circ)$,
clarify the structure of $\Aut(\circ)$. Indeed, Theorems~\ref{thm:norm} and \ref{thm:norm-star} 
give detailed structural descriptions of the groups
 $N(\Adj(\circ))$ and $N^*(\Adj(\circ))$ in the case when $\Adj(\circ)$ 
is semiprimary and separable (which holds, for example, whenever $U$ and $V$ are 
finite-dimensional vector spaces over a field).  
These structural details are often sufficient to compute generators for autotopism groups 
efficiently (say in polynomial time), as was observed in \cite{LW:invariants}*{Theorem 1.3} 
for central simple rings.  
\medskip

The paper is organized as follows.  
In Section \ref{sec:fund} we develop the necessary background on bimaps,
culminating with our Galois connection between bimaps on $U \times V$ and subsets 
of $\End(U)\times\End(V)^{\rm op}$ (\thmref{thm:Galois}).  

In Section~\ref{sec:auto} we study the autotopism group of an arbitrary bimap, proving \thmref{thm:main1}, 
and giving a precise structure theorem for $N(\Adj(\circ))$ in the case when $\Adj(\circ)$ is a semiprimary
and separable (\thmref{thm:norm}). 
We also describe an algorithm 
to construct generators for the normaliser of a finite-dimensional matrix algebra, and hence for the autotopism
group of a tensor product.

In Section~\ref{sec:pseudo}, we consider the special case when $\circ$ is symmetric or alternating. 
We prove~\thmref{thm:main2} and provide an analogue of \thmref{thm:norm} for rings with involutions 
(\thmref{thm:norm-star}).

In the concluding section, we expand on the key applications of our results and algorithms to
the problem of computing automorphism groups of finite $p$-groups, and briefly 
discuss ongoing work in this area.


\section{homotopisms, isotopisms and pseudo-isometries of bimaps}
\label{sec:fund}
Our use of rings and modules is standard.
 A {\em bi-additive map} (or just {\em bimap}) is a function 
$\circ\colon U\times V\to W$, where $U,V,W$ are abelian groups, 
satisfying the two-sided distributive law:
\begin{align*}
 (\forall & u_1,u_2\in U, \forall v\in V)	  & (u_1+u_2)\circ v  = u_1\circ v + u_2\circ v \\
(\forall & u\in U,\forall v_1,v_2\in V)	 & u\circ(v_1+v_2)  = u\circ v_1 + u\circ v_2 .
\end{align*}
Recall that our bimaps are {\em full}, in that $W=U\circ V=\langle u\circ v: u\in U,v\in V\rangle$.

Let $\circ\colon U\times V\to W$ and 
$\diamond\colon U'\times V'\to W'$ be bimaps.
A {\em homotopism} from $\circ$ to $\diamond$ is a triple 
$(f\colon U\to U',g\colon V\to V'; h\colon W\to W')$ 
of homomorphisms satisfying
\begin{align}\label{def:homotopism}
	(\forall & u\in U,\forall v\in V) & (u\circ v)^h & = uf\diamond gv.
\end{align} 
Denote by $\Hom(\circ,\diamond)$ the set of all homotopisms from 
$\circ$ to $\diamond$.

\begin{remark}
For a product $\circ:U\times V\to W$ to determine a nonassociative ring requires that $U=V=W$.  This
can result in products that are not full, for example, the multiplication of a nilpotent Lie algebra is never
full.  The restriction to full bimaps can be avoided by considering ``weak'' homotopisms, which are
triples $(f,g;h)$ where $h$ is defined from $U\circ V\to U'\diamond V'$ instead of from $W\to W'$.
\end{remark}

The class of bimaps together with homotopisms forms a category, 
called the {\em homotopism category}. There are
various natural morphisms on classes of bimaps, such as 
adjoint-morphisms \cite{Wilson:div},
so we name the categories after the morphisms rather than the objects.
We are interested primarily in {\em isotopisms}, namely homotopisms 
whose constituent maps 
are all isomorphisms. Define the {\em autotopism group} of a bimap 
$\circ\colon U\times V\to W$ to be
\begin{equation}
\label{eq:aut}
	\Aut(\circ)   =  \Hom(\circ,\circ) \cap(\Aut(U)\times \Aut(V)\times \Aut (W)).
\end{equation}
We denote the elements of $\Aut (\circ)$ as triples $(f,g;h)$, separating $h$ from $f$ and $g$ to 
distinguish between the two natural restrictions of $\Aut (\circ)$: first on 
$U\times V$, and second on $W$. As $\circ$ is full, $h$ is determined by 
$(u\circ v)^h=uf\circ vg$, so
$\Aut(\circ)$ is naturally and faithfully represented on $U\times V$.

\subsection{Factor equivalence}
\label{subsec:factor}
We now fix two abelian groups $U$ and $V$ and consider bimaps on $U\times V$.
For a bimap $\circ\colon U\times V\to X$ and homomorphism $\tau\colon X\to Y$  
we define the bimap $\circ^{\tau}\colon U\times V\to Y$ by 
\begin{align*}
	(\forall & u\in U,\forall v\in V) & u\circ^{\tau}v & = (u\circ v)^{\tau}.
\end{align*}
For bimaps $\circ\colon U\times V\to X$ and $\diamond\colon U\times V\to Y$, we 
say $\diamond$ \emph{factors through} $\circ$, and write $\circ\rightarrow \diamond$, 
if there is a homomorphism 
$\tau\colon U\circ V\to Y$ such that $\circ^\tau=\diamond$ (equivalently,
if $(1_U,1_V;\tau)$ is a homotopism from $\circ$ to $\diamond$).
We say that $\circ$ and $\diamond$ are {\em factor equivalent}, denoted 
$\circ\leftrightarrow \diamond$, if $\circ\to \diamond$ and $\diamond\to \circ$.
Note that $\circ\leftrightarrow \diamond$ forces $X=U\circ V$ and $Y=U\diamond V$ 
to be isomorphic (as our bimaps are full).  It follows that
$\leftrightarrow$ is an equivalence relation on the class of bimaps on $U\times V$,
and $\to$ is a partial order on the bimaps on $U\times V$ relative to
the equivalence $\leftrightarrow$.
\smallskip

Let $U\otimes V$ denote the usual
tensor product of $U$ and $V$ (as abelian groups) and let
$\otimes\colon U\times V\to U\otimes V$ denote the associated bimap.  
The universal property of tensor products asserts that every bimap 
$\circ\colon U\times V\to X$ factors 
uniquely through $\otimes\colon U\times V\to U\otimes V$. Writing
$\hat{\circ}\colon U\otimes V\to X$ for the implied homomorphism, 
so that $\circ=\otimes^{\hat{\circ}}$, we have $u\circ v=(u\otimes v)^{\hat{\circ}}$
for all $u\in U$ and all $v\in V$.  Hence, for a bimap $\diamond$ on $U\times V$, 
\begin{equation}
	\circ \to \diamond \quad \Longleftrightarrow \quad \ker \hat{\circ}\subseteq \ker \hat{\diamond}.
\end{equation} 
\figref{fig:def-hat-b} illustrates this correspondence.

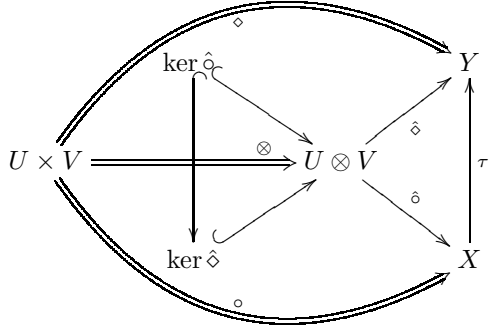
\begin{figure}[!htbp]
\begin{equation*}
\xymatrix{
	& \ker\hat{\circ}~ \ar@{^(->}[dr]\ar@{^(->}[dd] 	& & Y\\
	U\times V\ar@/^40pt/@{=>}[urrr]_{\diamond} \ar@{=>}[rr]^>>>>>{\otimes}\ar@/_40pt/@{=>}[rrrd]^{\circ} & & U\otimes V\ar[ur]_{\hat{\diamond}}\ar[dr]^{\hat{\circ}} &\\
	& \ker\hat{\diamond}\ar@{^(->}[ur]	& & X\ar[uu]_{\tau}
}
\end{equation*}
\caption{Commutative diagram demonstrating how the bimap $\diamond$ factors through $\circ$ if, and only 
if, their factorisations through the tensor product have nested kernels. 
{\em Double shafted arrows denote 
bimaps and regular arrows are homomorphisms.}}\label{fig:def-hat-b}
\end{figure}

Associated to each subgroup $K$ of $U\otimes V$ is a bimap 
\begin{align*}
	\bullet& =\bullet(K)\colon U\times V\to (U\otimes V)/K,
\end{align*}
called the {\em regular bimap mod $K$}, defined by
\begin{align}\label{def:regular}
 (\forall & u\in U,\forall v\in V)
& u\bullet v & =u\otimes v+K.
\end{align}
\figref{fig:def-hat-b} also shows that, for every 
bimap $\circ\colon U\times V\to X$, we have 
$\circ\leftrightarrow \bullet(\ker \hat{\circ})$.  We regard $\bullet(\ker\hat{\circ})$ as the 
\emph{regular representation} of $\circ$ since it is a canonical representative of the factor-equivalence class 
containing $\circ$. 
This establishes a bijection $\circ\mapsto \ker \hat{\circ}$ from the factor-equivalence classes on 
$U\times V$ to the set of subgroups of $U\otimes V$.

We now define meets and joins for bimaps on $U\times V$ in a manner that respects factor equivalence.  
For bimaps $\circ\colon U\times V\to X$ and $\diamond\colon U\times V\to Y$, 
let 
\begin{equation*}
W= \{(u\circ v,u\diamond v)\in X\oplus Y: u\in U, v\in V\}\leq X\times Y, 
\end{equation*}
and
define 
$(\circ\cap \diamond)\colon U\times V\to W$ by
\begin{align}
\label{def:meet}
	(\forall & u\in U, \forall v\in V) & 
	u(\circ\cap \diamond)v & = (u\circ v, u\diamond v).
\end{align}
For the join $\circ\cup \diamond$, let 
$J=\langle(u\circ v, -u\diamond v) \colon u\in U, v\in V\rangle\leq X\times Y$.
Now define $(\circ\cup \diamond)\colon U\times V\to W/J$ so that 
\begin{align}
\label{def:join}
	(\forall &  u\in U,\forall v\in V) & 
	u(\circ\cup \diamond) v & = (u\circ v,u\diamond v) + J.
\end{align}
Both $\cap$ and $\cup$ generalize to arbitrary sets of bimaps on $U\times V$. 

\begin{figure}
\begin{equation*}
\xymatrix{
\\
	&  & U\circ V \ar[dr]\ar@/^/[drr]^{\tau} & \\
U\times V\ar@/^5pc/@{=>}[rrrr]^{\star} \ar@{=>}[r]^{\otimes}\ar@/^/@{=>}[urr]_{\circ}\ar@/_/@{=>}[drr]_{\diamond}  
		& U\otimes V\ar[ur]_{\hat{\circ}}\ar[dr]^{\hat{\diamond}} & & U(\circ\cup \diamond)V \ar[r] & Z. \\
	&  & U\diamond V \ar[ur]\ar@/_/[urr]_{\mu} & 
}
\end{equation*}
\caption{Commutative diagram showing that if $\circ\to\star$ and
$\diamond\to \star$ then $(\circ \cup \diamond)\to \star$. 
The point is that the middle of the diagram is a pushout for $(\hat{\circ}, \hat{\diamond})$.}
\label{fig:join}
\end{figure}
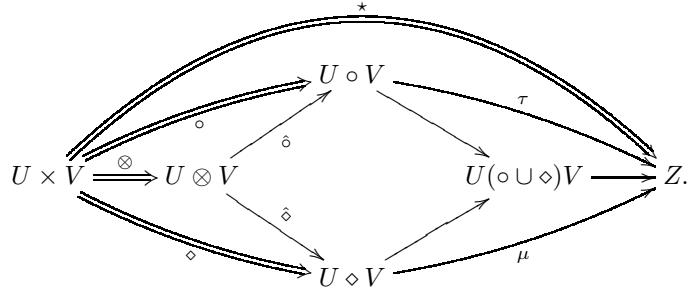

\begin{prop}
\label{prop:lattice}
The factor-equivalence classes of bimaps on $U\times V$ form a lattice under $\to$, $\cap$ and $\cup$  that
is isomorphic to the lattice of subgroups of $U\otimes V$ under $\subseteq$, $\cap$, and $+$.  
A bimap $\circ$ is at the top if 
$U\circ V=0$, and at the bottom if $\hat{\circ}\colon U\otimes V\to U\circ V$ is an isomorphism.  
\end{prop}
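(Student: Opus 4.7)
The plan is to transport the lattice structure along the bijection established in the paragraph immediately preceding the proposition. That bijection $\Phi\colon [\circ]\mapsto \ker\hat{\circ}$ between factor-equivalence classes of bimaps on $U\times V$ and subgroups of $U\otimes V$ satisfies $\circ\to\diamond$ if and only if $\ker\hat{\circ}\subseteq \ker\hat{\diamond}$, making it an order-isomorphism; since the subgroups of $U\otimes V$ form a lattice under $\cap$ and $+$, so do the factor-equivalence classes of bimaps.

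It then suffices to verify that the constructions $\cap$ and $\cup$ on bimaps given in \eqref{def:meet} and \eqref{def:join} transport via $\Phi$ to $\cap$ and $+$ on subgroups. For the meet, the universal property of the tensor product identifies $\widehat{\circ\cap\diamond}$ with the diagonal $(\hat{\circ},\hat{\diamond})\colon U\otimes V\to X\oplus Y$, whose kernel is plainly $\ker\hat{\circ}\cap \ker\hat{\diamond}$. For the join, the approach is to read \figref{fig:join} as a pushout diagram: the subgroup $J$ is identified with $\{(\hat{\circ}(\xi),-\hat{\diamond}(\xi)): \xi\in U\otimes V\}$, so $W/J$ is the pushout of $\hat{\circ}$ and $\hat{\diamond}$. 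A short diagram chase then yields that $\widehat{\circ\cup\diamond}$ has kernel $\ker\hat{\circ}+\ker\hat{\diamond}$, as required.

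The top and bottom are then read off from the corresponding subgroups: the top of the subgroup lattice is $U\otimes V$, forcing $\hat{\circ}=0$ and hence $U\circ V=0$; the bottom is $0$, forcing $\hat{\circ}$ to be injective, which together with the surjectivity implied by fullness makes $\hat{\circ}\colon U\otimes V\to U\circ V$ an isomorphism. The main obstacle is the careful bookkeeping for the join---specifically, reconciling the explicit quotient $W/J$ with the genuine pushout of $\hat{\circ}$ and $\hat{\diamond}$; everything else reduces to one-line computations inherited from the subgroup lattice.
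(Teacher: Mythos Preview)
Your proposal is correct and follows essentially the same approach as the paper: both arguments identify $U(\circ\cup\diamond)V$ with the pushout of $(\hat{\circ},\hat{\diamond})$ and compute the kernel of $\widehat{\circ\cap\diamond}$ as the intersection $\ker\hat{\circ}\cap\ker\hat{\diamond}$. The only organizational difference is that the paper verifies the join property directly via the universal property of the pushout (if $\circ,\diamond\to\star$ then $\circ\cup\diamond\to\star$), whereas you first transport the lattice structure along the order-isomorphism $\Phi$ and then check that the explicit $\cup$ matches subgroup sum by computing $\ker\widehat{\circ\cup\diamond}$; these are two sides of the same coin.
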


\begin{proof}
Observe that $U(\circ\cup \diamond)V=((U\circ V)\oplus (U\diamond V))/J$ 
is a pushout for the pair 
$(\hat{\circ},\hat{\diamond})$ in the category of abelian groups.
Furthermore, if $\star\colon U\times V\to Z$ is a bimap and 
$\circ, \diamond\to \star$, then the associated homomorphisms $U\circ V\to Z$ 
and $U\diamond V\to Z$ create a commutative square from $U\otimes V$ to $Z$ (see \figref{fig:join}).  
The universal property of the pushout implies that
$\circ\cup \diamond\to \star$, and we see that $\cup$ is indeed a join.
\figref{fig:join} also illustrates that $\circ\cup \diamond\leftrightarrow \bullet(\ker \hat{\circ}+\ker \hat{\diamond})$.  In similar fashion, $\circ\cap \diamond\leftrightarrow \bullet(\ker \hat{\circ}\cap \ker\hat{\diamond})$ and serves as the meet.
\end{proof}

\begin{remark}\label{rem:meet}
The meet $\circ \cap \diamond$ of bimaps $\circ:U\times V\to Y$ and $\diamond:U\times V\to Y$ 
has appeared elsewhere as a bimap $U\times V\to X\oplus Y$, for example in~\cite{BW:find-isom}*{p. 1976}.  
We have modified our definition of $\circ\cap\diamond$ here to ensure that it is a full bimap.
\end{remark}

\subsection{A Galois connection}
\label{subsec:Galois}
A {\em Galois connection} between partially ordered sets $(P,\leq)$ and $(Q,\subseteq)$ is a pair
of functions $\bot:P\to Q$ and $\top:Q\to P$ such that 
\begin{align*}
	(\forall&  p\in P, \forall q\in Q) & q^{\top} \leq p \quad &\Longleftrightarrow\quad q \subseteq p^{\bot}.
\end{align*}
The reader is referred to \cite{GALOIS} for equivalent definitions and interpretations of Galois connections.
In this section we exhibit a Galois connection between the lattice of factor-equivalence classes
of bimaps on $U\times V$ and the lattice of subsets of the ring $\End(U)\times\End (V)^{{\rm op}}$.  

Recall that we regard $U$ as a right $\End(U)$-module and a left $\End(U)^{\rm op}$-module.
Thus, subsets $S$ of $\End(U)\times\End(V)^{{\rm op}}$ act on the right of $U$ and on the left of $V$ so that one
may form the tensor product $\otimes_S:U\times V\to U\otimes_S V$.  We say that a bimap 
$\circ\colon U\times V\to W$ is \emph{mid $S$-linear} if it factors through $\otimes_S$:
\begin{align}
	(\forall & s\in S,\forall u\in U,\forall v\in V) & us\circ v & = u\circ sv.
\end{align}
For a fixed bimap $\circ:U\times V\to W$, recall from \eqref{def:adj} that
\begin{align*}
	\Adj(\circ) & = \{ (x,y)\in \End(U)\times \End(V)^{{\rm op}} : \forall u\in U,\forall v\in V,
		ux\circ v = u\circ yv \},
\end{align*}
a subring of $\End(U)\times \End(V)^{\rm op}$. We can now
formulate the Galois connection.

\begin{thm}
\label{thm:Galois}
Let  $U$ and $V$ be abelian groups. If
$S\subseteq \End(U)\times \End(V)^{{\rm op}}$, and 
$\circ\colon U\times V\to W$ is a bimap, then
$$
\otimes_{S} \to \circ~\mbox{if, and only if,}~S \subseteq \Adj(\circ).
$$ 
This establishes a Galois connection between the lattice of factor-equivalence classes of
bimaps on $U\times V$ and the lattice of subsets of $ \End(U) \times \End(V)^{{\rm op}}$. 
Moreover,
\begin{enumerate}[(i)]
\item $\Adj(\otimes_{\Adj(\circ)}) =\Adj(\circ)$, and
\item $\otimes_{\Adj(\otimes_S)}  
\leftrightarrow \otimes_S$.
\end{enumerate}
Hence $\Adj(\otimes_{-})$ and $\otimes_{\Adj(-)}$ are closure operators
on the two lattices.
\end{thm}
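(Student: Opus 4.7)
The plan is to derive the whole theorem from a single biconditional,
\[
\otimes_S\to\circ\quad\text{if, and only if,}\quad S\subseteq\Adj(\circ).
\]
Once this is established, matching the pair $(\otimes_{(-)},\Adj(-))$ to the template $q^\top\le p\Leftrightarrow q\subseteq p^\bot$ of \S\ref{subsec:Galois} makes the Galois-connection claim a tautology, and parts (i), (ii), and the closure-operator assertions reduce to standard unit/counit yoga. I would prove the biconditional by appealing to the universal property of $U\otimes_S V$: it is the quotient of $U\otimes V$ by the subgroup generated by the relations $\{ux\otimes v-u\otimes yv\colon (x,y)\in S,\ u\in U,\ v\in V\}$, so by \figref{fig:def-hat-b} a homomorphism $\tau\colon U\otimes_S V\to W$ realising $\otimes_S^{\tau}=\circ$ exists if, and only if, the canonical map $\hat\circ\colon U\otimes V\to W$ kills all of those relations. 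That condition is $ux\circ v=u\circ yv$ for every $(x,y)\in S$ and every $u\in U$, $v\in V$, which by \eqref{def:adj} is precisely $S\subseteq\Adj(\circ)$.

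For the remainder, specialising the biconditional yields a unit and a counit. Taking target $\otimes_S$ and using the trivial $\otimes_S\to\otimes_S$ gives the unit $S\subseteq\Adj(\otimes_S)$; taking $S=\Adj(\circ)$ and using $\Adj(\circ)\subseteq\Adj(\circ)$ gives the counit $\otimes_{\Adj(\circ)}\to\circ$. For (i): the unit at $\Adj(\circ)$ produces the inclusion $\Adj(\circ)\subseteq\Adj(\otimes_{\Adj(\circ)})$, while the counit yields $\tau$ with $(u\otimes_{\Adj(\circ)}v)^\tau=u\circ v$, and applying $\tau$ to the relation witnessed by any $(x,y)\in\Adj(\otimes_{\Adj(\circ)})$ forces $ux\circ v=u\circ yv$, giving the reverse inclusion. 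For (ii): the counit at $\otimes_S$ is $\otimes_{\Adj(\otimes_S)}\to\otimes_S$, and the unit $S\subseteq\Adj(\otimes_S)$ fed back into the biconditional with target $\otimes_{\Adj(\otimes_S)}$ gives $\otimes_S\to\otimes_{\Adj(\otimes_S)}$; together these produce $\otimes_{\Adj(\otimes_S)}\leftrightarrow\otimes_S$.

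Idempotency of the operators $\Adj(\otimes_{(-)})$ and $\otimes_{\Adj(-)}$ is then precisely (i) and (ii); monotonicity of each composite is automatic from the Galois connection; and the inflationary (respectively, deflationary modulo $\leftrightarrow$) property is the unit (respectively, counit), so the closure-operator statements follow. The only step requiring real care is the opening biconditional, specifically the assertion that $U\otimes_S V$ enjoys the universal property of a bimap satisfying $ux\otimes_S v=u\otimes_S yv$ for all $(x,y)\in S$ even when $S$ is merely a subset rather than a subring of $\End(U)\times\End(V)^{\rm op}$. This is the main (minor) obstacle, and it is handled by noting that the subgroup of $U\otimes V$ generated by the $S$-relations coincides with the one generated by the subring $\langle S\rangle$ they span, so $\otimes_S\leftrightarrow\otimes_{\langle S\rangle}$ and no generality is lost.
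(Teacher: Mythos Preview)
Your proof is correct and follows essentially the same approach as the paper's. Both arguments verify the biconditional directly from the universal property of $U\otimes_S V$ (the paper checks well-definedness of $\tau$ on the relations, you phrase it as $\hat\circ$ killing the relation subgroup), and both derive (i) and (ii) by specialising the biconditional to obtain the unit $S\subseteq\Adj(\otimes_S)$ and counit $\otimes_{\Adj(\circ)}\to\circ$ and then chasing the obvious inclusions; your unit/counit language is a mild repackaging of exactly the steps the paper writes out explicitly. Your closing remark about $\otimes_S\leftrightarrow\otimes_{\langle S\rangle}$ for arbitrary subsets $S$ is a useful clarification that the paper leaves implicit.
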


\begin{proof}
First, if  $\circ$ factors through $\otimes_S$ then there is a (unique) map 
$\hat{\circ}\colon U\otimes_S V\to W$ such that
$u\circ v=(u\otimes v)^{\hat{\circ}}$ for all $u\in U$ and all $v\in V$. For each $s\in S$,
\begin{align*}
	(\forall & u\in U,\forall v\in V) & 
us\circ v & =(us\otimes v)^{\hat{\circ}}=(u\otimes sv)^{\hat{\circ}}=u\circ sv.
\end{align*}
So $S\subseteq\Adj(\circ)$. 
Conversely, suppose $S\subseteq\Adj(\circ)$. Define
$\tau\colon U\otimes_S V\to U\circ V$ on pure tensors, sending $u\otimes v\mapsto u\circ v$,
and extending linearly. For each $s\in S$,
\begin{align*}
	(\forall & u\in U,\forall v\in V) & 
(us\otimes v)^{\tau} & =us\circ v=u\circ sv=(u\otimes sv)^{\tau}.
\end{align*}
It follows that $\tau$ is well-defined, and hence that $\circ$ factors through $\otimes_S$.

Both (i) and (ii) are general properties of Galois connections, but their proof in context is straight-forward.
For (i), we have $\Adj(\circ)\subseteq\Adj(\otimes_{\Adj(\circ)})$. For the reverse inclusion,
we know that
$\otimes_{\Adj(\circ)}\to \circ$, so there exists
$\tau\colon U\otimes_{\Adj(\circ)}V\to U\circ V$ such that 
$u\circ v=(u\otimes v)^{\tau}$ for all $u\in U$, $v\in V$.
Let $s=(x,y)\in\Adj(\otimes_{\Adj(\circ)})$. 
\begin{align*}
	(\forall & u\in U, \forall v\in V) & 
	us\circ v & =(ux\otimes v)^{\tau}=(u\otimes yv)^{\tau}=u\circ sv,
\end{align*}
so that $s\in \Adj(\circ)$.
Similarly for (ii), we have $\otimes_{\Adj(\otimes_S)}\to \otimes_S$. 
Since $S\subseteq \Adj(\otimes_S)$, the identity map on pure
tensors extends linearly to a well-defined map $U\otimes_S V\to U\otimes_{\Adj(\otimes_S)}V$.
It follows that $\otimes_S\to\otimes_{\Adj(\otimes_S)}$.
\end{proof}

A consequence of \thmref{thm:Galois} is that a 
bimap $\circ$ on $U\times V$ is a tensor product (i.e. it possesses the universal mapping property for some set 
$S\subseteq\End(U)\times\End(V)^{\rm op}$) if, and only if,  $\circ\leftrightarrow \otimes_{\Adj(\circ)}$.  
This proves the last assertion of \thmref{thm:main1}.

Note that if $R$ is any ring with multiplication $\cdot:R\times R\to R$, then
$\Adj(\cdot_R)=\{(u\mapsto ur, v\mapsto rv) : r\in R\}\cong R$.  In that sense adjoint rings
are arbitrary.  Their representations, however, are more constrained, in the sense that a 
subring $S$ of 
$\End(U) \times \End(V)^{{\rm op}}$ seems rather unrelated to its closure,  $\Adj(\otimes_S)$. 
For instance, there are commutative subrings $S$ of $\End(U)\times\End(V)^{\rm op}$, having
nontrivial Jacobson radical, for which $\Adj(\otimes_S)$ is noncommutative and simple.

\section{Autotopisms and Normalisers}
\label{sec:auto}
Having introduced the homotopism category of bimaps and some of its basic properties,
we now consider the automorphism groups in the category. 

In Section~\ref{subsec:aut-norm} we show, for an arbitrary
bimap $\circ\colon U\times V\to W$, that the autotopism group
$\Aut(\circ)$ is naturally represented as a normaliser, $N(\Adj(\circ))$, within $\Aut(U)\times\Aut(V)$, thereby
completing the proof of~\thmref{thm:main1}. In Section~\ref{subsec:norm-struc}, 
we describe $N(A)$ for semiprimary separable subrings $A$ of $\End(U)\times\End(V)^{{\rm op}}$, 
and hence also $\Aut(\otimes_S)$ for $S\subseteq \End(U)\times\End(V)^{{\rm op}}$ having $\Adj(\otimes_S)$
is semiprimary and separable.  This includes the autotopisms of tensor products of 
finite-dimensional vector spaces. Finally,
in Section~\ref{subsec:alg-norm}, we present an algorithm to construct $N(A)$.

\subsection{Autotopisms acting on adjoints}
\label{subsec:aut-norm}
For abelian groups $U,V$, and subring $A$ of
$\End(U)\times \End (V)^{{\rm op}}$, define the \emph{normaliser} of $A$ to be
\begin{equation}
\label{eq:norm A}
\begin{split}
	N(A) & = \left\{(f,g)\colon \forall (x,y)\in A,~(x,y)^{(f,g)}=(f^{-1} x f, gyg^{-1})\in A\right\} \\
	& \subseteq  \Aut(U) \times \Aut(V).
\end{split}
\end{equation}

\begin{thm}
\label{thm:autotope}
Let $U$ and $V$ be abelian groups. 
\begin{enumerate}[(i)]
\item If $\circ$ is a bimap on $U\times V$ then
$\Aut(\circ)|_{\Aut(U)\times \Aut(V)}\subseteq N(\Adj(\circ))$.
\item If $S\subseteq  \End(U)\times \End(V)^{{\rm op}}$ then $\Aut(\otimes_S)|_{\Aut(U)\times \Aut(V)}=N(\Adj(\otimes_S))$.
\end{enumerate}
\end{thm}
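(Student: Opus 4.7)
The plan is to treat (i) as a direct computation with the autotopism identity, and then deduce one containment in (ii) from (i) and prove the reverse containment by constructing $h$ through the universal property of $\otimes_S$.

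For (i), fix $(f,g;h)\in \Aut(\circ)$ and $(x,y)\in \Adj(\circ)$. I would start from the defining identity $ux\circ v = u\circ yv$ of the adjoint pair, apply $h$ to both sides, and use the autotopism relation $(a\circ b)^h = af\circ gb$ to obtain $uxf\circ gv = uf\circ gyv$ for all $u\in U$, $v\in V$. Substituting $u\mapsto uf^{-1}$ and $v\mapsto g^{-1}v$---legitimate since $f$ and $g$ are invertible---rewrites this as $u(f^{-1}xf)\circ v = u\circ(gyg^{-1})v$, which is precisely the adjoint condition for $(x,y)^{(f,g)} = (f^{-1}xf,\,gyg^{-1})$. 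Hence $(f,g)\in N(\Adj(\circ))$, giving (i).

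For (ii), the forward containment $\Aut(\otimes_S)|_{\Aut(U)\times\Aut(V)}\subseteq N(\Adj(\otimes_S))$ is immediate from (i) applied to the bimap $\circ = \otimes_S$. For the reverse, fix $(f,g)\in N(\Adj(\otimes_S))$ and define the biadditive map $\beta\colon U\times V\to U\otimes_S V$ by $\beta(u,v) = uf\otimes_S gv$. The key step is to verify that $\beta$ is mid-$S$-linear, so that it factors through $\otimes_S$ by the universal property. By the Galois connection (\thmref{thm:Galois}), $S\subseteq \Adj(\otimes_S)$, so for any $(x,y)\in S$ the conjugate $(f^{-1}xf,\,gyg^{-1})$ lies in $\Adj(\otimes_S)$ by hypothesis on $(f,g)$. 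Its adjoint identity $u(f^{-1}xf)\otimes_S v = u\otimes_S (gyg^{-1})v$, after the substitutions $u\mapsto uf$ and $v\mapsto gv$ and routine collapses such as $(uf)(f^{-1}xf) = uxf$ and $(gyg^{-1})(gv) = g(yv)$, becomes $\beta(ux,v) = \beta(u,yv)$, which is exactly mid-$S$-linearity. Thus $\beta$ descends to an endomorphism $h$ of $U\otimes_S V$ satisfying $(u\otimes_S v)^h = uf\otimes_S gv$. Applying the same construction to $(f^{-1},g^{-1})\in N(\Adj(\otimes_S))$ produces a two-sided inverse of $h$ on pure tensors, hence on all of $U\otimes_S V$, so $(f,g;h)\in \Aut(\otimes_S)$ and (ii) follows.

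The main obstacle is strictly notational: one must track carefully the right action of $\End(U)$ versus the left action of $\End(V)^{\rm op}$ when collapsing the composite symbols $(uf)(f^{-1}xf)$ and $(gyg^{-1})(gv)$, which is where the convention that $\End(V)^{\rm op}$ acts on the left of $V$ really pays off. Once these conventions are pinned down, the argument is a clean sequence of substitutions requiring no additional creativity; no nontrivial input about the structure of $S$ is used beyond the Galois-theoretic observation $S\subseteq \Adj(\otimes_S)$.
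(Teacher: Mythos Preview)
Your proof is correct and follows essentially the same approach as the paper: part (i) is the same direct computation (the paper organizes it as a single chain $ux^f\circ v = (uf^{-1}x\circ g^{-1}v)^h = (uf^{-1}\circ yg^{-1}v)^h = u\circ y^g v$, which is your argument read backward), and part (ii) likewise constructs $h$ by checking that $(u,v)\mapsto uf\otimes_S gv$ respects the mid-$S$-linear relations using $(x^f,y^g)\in\Adj(\otimes_S)$. Your explicit verification that $h$ is invertible via $(f^{-1},g^{-1})$ is a small improvement over the paper, which simply asserts $(f,g;h)\in\Aut(\otimes_S)$ once $h$ is well-defined.
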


\begin{proof}
For (i), let $(f,g;h)\in \Aut(\circ)$.  For each $(x,y)\in \Adj(\circ)$ and all $u\in U,v\in V$,
\begin{align*}
	ux^f\circ v &  = (uf^{-1}x\circ g^{-1} v)^h
		 = (uf^{-1}\circ yg^{-1} v)^h =  u\circ y^g v.
\end{align*}
Therefore, $(x,y)^{(f,g)}\in \Adj(\circ)$, so that $\Aut(\circ)|_{\Aut(U)\times \Aut(V)}$ normalizes $\Adj(\circ)$.

For (ii) we require the reverse containment in the case that $\circ\leftrightarrow \otimes_S=\otimes_{\Adj(\otimes_S)}$ for some
$S\subseteq \End(U)\times \End(V)^{\rm op}$.
Suppose that $(f,g)\in N(\Adj(\otimes_S))$.  We construct
$h\in \Aut(U\otimes_S V)$ such that $(f,g;h)\in \Aut(\otimes_S)$.   If such $h$ exists, it is uniquely 
defined by
$(u\otimes v)^h=uf\otimes gv$, for each $u\in U$ and each $v\in V$.  Accordingly $h$ exists if this definition
is well-defined (respects the tensor product relations).  For all $u,u'\in U$ and all $v,v'\in V$,
\begin{align*}
	((u+u')\otimes v)^h & = uf\otimes gv + u'f\otimes gv = (u\otimes v)^h+(u'\otimes v)^h,\textnormal{ and}\\
	(u\otimes (v+v'))^h & = uf\otimes gv + uf\otimes gv' = (u\otimes v)^h+(u\otimes v')^h.
\end{align*}
Finally, let $s=(x,y)\in S\subseteq \Adj(\otimes_S)$.  As $(f,g)\in N(\Adj(\otimes_S))$, it follows that $(x^f, y^g)\in \Adj(\otimes_S)$.  Now, for each $u\in U$ and each $v\in V$,
\begin{align*}
	(us\otimes v)^h & = ux f\otimes gv = uf x^f \otimes gv = uf \otimes y^g g v
		= uf\otimes g y v = (u\otimes sv)^h.
\end{align*}
Therefore $h$ is well-defined on $U\otimes_S V$, so that $(f,g;h)\in \Aut(\otimes_S)$.  As $\circ \leftrightarrow \otimes_S$, $\Aut(\otimes_S)|_{\Aut(U)\times\Aut(V)}=\Aut(\circ)|_{\Aut(U)\times \Aut(V)}$, which completes the proof.
\end{proof}
\medskip

\noindent {\bf Proof of~\thmref{thm:main1}.}
The first assertion of~\thmref{thm:main1} is
\thmref{thm:autotope}(i). As commented earlier, the second assertion follows from
the Galois connection; specifically, from 
\thmref{thm:Galois}(i).\hfill$\Box$

\subsection{Normalisers of matrix rings}
\label{subsec:norm-struc}
\thmref{thm:autotope} states that  the groups $\Aut(\circ)$
act as automorphisms of the rings $\Adj(\circ)$.  We can use the well-developed structure   
of rings to limit the behavior of $\Aut(\circ)$.  
\medskip

In this section we pursue a more precise description of $N(\Adj(\circ))$ for
$k$-bilinear maps $U\times V\to W$, where $k$ is a field, and $U,V$ and $W$ 
are finite-dimensional $k$-vector spaces.  The proof actually applies to any adjoint ring
which is separable and semiprimary.  Since we are unable at present to describe which subrings
of $\End(U)\times \End(V)^{\rm op}$ are adjoint rings we assume anything is possible.
The resulting structure theorem (\thmref{thm:norm}) is technical, but
each component is implied by well-known properties of rings.   
We need this level of detail for timing estimates of various algorithms,
such as those in Section~\ref{subsec:alg-norm} and \cite{BW:co-rank-2}.
The ring-theoretic properties we use are found in \cite{CR}*{Sections 3,5,6}.
\medskip

Before stating the main structure theorem, we set up some notation and establish some preliminary results.
Fix a field $k$, finite-dimensional $k$-spaces $U,V$, and
let $A$ be a $k$-subalgebra of $\End_k(U)\times \End_k(V)^{\rm op}$.  
Let $J=J(A)$ be the Jacobson radical of $A$.  Define the {\em radical series} of $A$ to be the finite module chain
\begin{align}
\label{eq:rad-series}
	U\oplus V > UJ\oplus JV > \cdots > UJ^c \oplus J^c V > UJ^{c+1} \oplus J^{c+1} V= 0.
\end{align}
As each ideal $J^i$ is characteristic in $A$, 
the radical series is $N(A)$-invariant.  For convenience we let $J^0=A$.  
The main theme is to choose bases for $U$ and $V$ such that
$A$ is block-upper triangular, and such that the action of $N(A)$ on $A$ 
permutes the blocks within each radical section
but otherwise respects the block decomposition.  We choose these bases 
with some additional properties in mind.
\smallskip

First, Wedderburn's principal theorem 
\cite{Jac:basicII}*{p. 374} establishes the existence of 
a subalgebra $S\leq A$ such that $A=J\oplus S$ as a $k$-vector
space.  We call this a {\em Wedderburn decomposition} of $A$. Since $S$ is semisimple, 
$UJ^i$ splits in $U$ as an $S$-module for each $i\in\{0,\ldots,c\}$; likewise $J^i V$ splits in $V$. 
Hence, there are $S$-submodules $X_0,\ldots,X_c\leq U$ and $Y_0,\ldots,Y_c\leq V$ 
such that for all $0\leq i\leq c$,
\begin{align}
\label{eq:define-Xi-Yi}
	 UJ^i & = X_i\oplus \cdots \oplus X_c & 
	 J^i V & = Y_i\oplus \cdots \oplus Y_c.
\end{align}
For each $i\in \{0,\dots,c\}$, $J$ is in the kernel of the induced action of $A$ on $UJ^i/UJ^{i+1}$ and
$J^iV/J^{i+1}V$, so these modules are $(A/J)$-modules, and hence semisimple.  Since $S$ splits 
with $J$ in $A$,  $UJ^i/UJ^{i+1}\cong X_i$ as $S$-modules. Therefore, bases for $U$ and $V$ that exhibit
the decompositions $U=X_0\oplus\cdots \oplus X_c$ and $V=Y_0\oplus \cdots\oplus Y_c$ will express $A$ in 
block-upper triangular form with $S$ represented as block-diagonal matrices. 
\medskip

It is convenient to state our main structure theorem with reference to the group
\begin{align}
\label{eq:define-NSJ}
	N(S;J) & = \Stab_{N(S)}(\{(UJ^i,J^iV): 0\leq i\leq c\}).
\end{align}
Our preliminary results describe some structural properties of this group.
Let $\mathcal{E}$ be the set of central-primitive
idempotents of $S$ \cite{CR}*{Section 3}. Then the minimal ideals of $S$
are precisely the ideals $I=eSe=Se$, where $e\in\mathcal{E}$. Next, we
define an equivalence relation $\sim$ on $\mathcal{E}$, where
\begin{align}
e\sim e' &~~  \Longleftrightarrow ~~ \begin{array}{l} eSe\cong e'Se'~\mbox{as rings, and for all}~i\in\{0,\dots, c\}, \\
\dim X_i e=\dim X_i e'~\mbox{and}~\dim eY_i=\dim e'Y_i. \end{array}
\end{align}
With this notation, we have the following result.

\begin{lemma}
\label{lem:perms}
Let $S$ be a semisimple complement to the Jacobson radical, $J$, of a subalgebra
of $\End_k(U)\times\End_k(V)^{{\rm op}}$, and let $\mathcal{E}$ be the set of central-primitive idempotents of $S$. 
Then, for each $e\in \mathcal{E}$, the following hold.
\begin{enumerate}
\item[(i)] For $i\in \{0,\ldots,c\}$, $X_ie$ is a direct sum of isomorphic simple $S$-submodules of $X_i$,
and $eY_i$ is a direct sum of isomorphic simple $S$-submodules of $Y_i$.
\item[(ii)] For $e'\in \mathcal{E}$, $e\sim e'$ if, and only if, there exists $g\in N(S;J)$ with $e^g=e'$ such that 
$g$ acts as the identity on $\mathcal{E}-\{e,e'\}$.
\end{enumerate}
\end{lemma}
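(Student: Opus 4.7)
The plan is to reduce both parts to standard facts about semisimple modules. For (i), since $S$ is semisimple with central-primitive idempotents $\mathcal{E}$, each finite-dimensional $S$-module $X_i$ decomposes canonically into isotypic components $X_i = \bigoplus_{e \in \mathcal{E}} X_i e$; only the simple factor $eSe$ acts nontrivially on $X_i e$ (because $e e'' = 0$ for $e'' \neq e$), so $X_i e$ is a direct sum of copies of the unique simple $eSe$-module, which is also a simple $S$-submodule of $X_i$. The statement for $eY_i$ is dual.

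For (ii), I would first dispose of the reverse direction. If $(f,g) \in N(S;J)$ carries $e$ to $e'$, then conjugation is a ring automorphism of $S$ that restricts to an isomorphism $eSe \to e'Se'$; since $f$ preserves each $UJ^i$ and normalises $S$, it induces an isomorphism on $UJ^i / UJ^{i+1} \cong X_i$ that is semilinear with respect to this conjugation and carries the $e$-isotypic component onto the $e'$-isotypic component. Counting dimensions gives $\dim X_i e = \dim X_i e'$, and the $V$-side argument gives $\dim eY_i = \dim e'Y_i$. For the forward direction, given $e \sim e'$, the plan is to extend the hypothesised isomorphism $\phi_0 : eSe \to e'Se'$ to a ring automorphism $\phi$ of $S$ that swaps $eSe$ with $e'Se'$ via $\phi_0$ and $\phi_0^{-1}$ and fixes every other block elementwise. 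The dimension equality $\dim X_i e = \dim X_i e'$ makes $X_i e$ and $X_i e'$ isomorphic as $eSe$-modules (with $e'Se'$ acting on $X_i e'$ via $\phi_0$), so one can choose a $\phi_0$-compatible linear isomorphism $\alpha_i : X_i e \to X_i e'$. Define $f \in \Aut(U)$ to act as $\alpha_i$ on $X_i e$, as $\alpha_i^{-1}$ on $X_i e'$, and as the identity on every other $X_i e''$; build $g \in \Aut(V)$ by the parallel recipe on the $eY_i$. Then $f$ fixes each summand $X_i$ (hence each $UJ^i$), $g$ fixes each $Y_i$ (hence each $J^i V$), and a short summand-by-summand calculation shows that conjugation by $(f,g)$ realises $\phi$ on $S$. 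Hence $(f,g) \in N(S;J)$ and its induced permutation of $\mathcal{E}$ swaps $e$ with $e'$ while fixing every other element.

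The main obstacle, and the reason the definition of $\sim$ couples dimension equalities on both the $X_i e$ and the $eY_i$, is coherence between the two sides: an element $s \in S$ is a pair $(s_U, s_V) \in \End(U) \times \End(V)^{\rm op}$, and conjugation by $(f,g)$ acts independently on its two coordinates, so both $f$ and $g$ must implement the same ring automorphism $\phi$. The intertwiners chosen on the $U$-side and on the $V$-side must therefore extend a single $\phi_0$, which is feasible precisely because the dimension hypotheses hold in both coordinates. Once coherent intertwiners are in hand, the remaining verifications are routine manipulations of the orthogonality relations among central-primitive idempotents.
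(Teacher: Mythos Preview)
Your proposal is correct and follows essentially the same approach as the paper: isotypic decomposition for (i), and for the forward direction of (ii) building the element of $N(S;J)$ block-by-block from intertwiners $X_i e \to X_i e'$ and $eY_i \to e'Y_i$, acting as the identity on all other summands. You are in fact more thorough than the paper on two points: you supply the reverse implication of (ii), which the paper leaves implicit, and you make explicit the coherence requirement that the intertwiners on the $U$-side and the $V$-side (and across the various $i$) must all be compatible with a single ring isomorphism $\phi_0\colon eSe\to e'Se'$, a point the paper handles only tacitly through its appeal to $\Delta_e$-semilinear maps.
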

\begin{proof}
For each $e\in \mathcal{E}$, $X_i e$ is a  faithful $eSe$-module and, as $e$ is central-primitive, $eSe$ is simple.
Therefore, each $X_i e$ is a direct sum of isomorphic simple $eSe$-modules and  every $S$-submodule
of $X_i$ that is isomorphic to a submodule of $X_i e$ is contained in $X_i e$; see \cite{CR}*{Section 3}. 

Now suppose $e'\in\mathcal{E}-\{e\}$.  If $e\sim e'$, then $eSe\cong e'Se'$ and both are simple 
Artian rings. As such, each is isomorphic to $M_{d_e}(\Delta_e)$ for some positive integer $d_e$ 
and finite-dimensional division algebra $\Delta_e$ over $k$.  
Thus, both $X_i e$ and $X_i e'$ are direct sums of multiple copies of $\Delta_e$.   
Since $\dim X_i e=\dim X_i e'$ for each $i$, it follows that $X_i e\cong X_i e'$ as $\Delta_e$-vector spaces.  
Hence, there is a $\Delta_e$-semilinear isomorphism from $X_ie$ to $X_i e'$.  
The same applies to the right modules $eY_i$ and $e'Y_i$.  

Fix $\Delta_e$-semilinear 
transformations $\varphi_i(e):X_i e\to X_i e'$ and $\psi_i:eY_i\to e'Y_i$.  
As $U=\bigoplus_{i=0}^c \bigoplus_{e\in\mathcal{E}} X_i e$  
define $\varphi\in \End(U)$ as $\varphi_i$ from $X_i e\to X_i e'$,
$\varphi_i^{-1}:X_i e'\to X_i e$, and as the identity on $X_i f\to X_i f$ for each $f\in\mathcal{E}-\{e,e'\}$.  
Mimic this construction to  create $\psi\in \End(V)^{\rm op}$ which interchanges $eY_i$ with $e'Y_i$ 
for each $i\in \{0,\dots,c\}$.  It follows that $e^{(\varphi,\psi)}=e'$, and
$(\varphi,\psi)\in N(S;J)$.
\end{proof}

As in the proof of \lemref{lem:perms},
for each $e\in\mathcal{E}$, $eSe\cong \M_{d_e}(\Delta_e)$ for a positive integer $d_e$ and finite-dimensional
division $k$-algebra $\Delta_e$. Furthermore, for each $i\in \{0,\dots,c\}$ there is are pairs $(m_i(e), n_i(e))$ of 
non-negative integers such that
\begin{align}\label{eq:x-y}
	X_i e & \cong  \Delta_e^{d_e}\otimes_k k^{m_i(e)} 
		& \textnormal{ and  }\qquad eY_i & \cong \Delta_e^{d_e}\otimes_k k^{n_i(e)}
\end{align}
as $eSe$-modules. The following is an explicit description of the subgroup of $N(S;J)$
that normalizes every simple ideal of $S$.

\begin{lemma}
\label{lem:final-stab}
Let $S$ be a semisimple complement to the Jacobson radical, $J$, of a subalgebra
of $\End_k(U)\times\End_k(V)^{{\rm op}}$, and $\mathcal{E}$ the set of central-primitive idempotents of $S$. 
Then the subgroup
of $N(S;J)$ that normalizes every ideal $eSe$ ($e\in\mathcal{E}$) of $S$ is isomorphic to
\begin{align*}
	\prod_{e\in\mathcal{E}} \Gamma {\rm L}_{d_e}(\Delta_e) \otimes_k 
	\left(
		\prod_{i=0}^c \GL_{m_i(e)}(k) \times \GL_{n_i(e)}(k)
		\right).
\end{align*}
\end{lemma}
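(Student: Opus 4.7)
The plan is to decompose the subgroup factor-by-factor over the central primitive idempotents $\mathcal{E}$, and then to identify each factor using Skolem--Noether together with the tensor decompositions in \eqref{eq:x-y}.

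First, I would observe that any element $\sigma = (f,g)$ in the subgroup normalizes every ideal $eSe$, and hence fixes its identity $e$ under conjugation. Thus $f$ commutes with each $e \in \mathcal{E}$ acting on $U$ and $g$ commutes with each $e$ acting on $V$, so the decompositions $U = \bigoplus_{e} Ue$ and $V = \bigoplus_{e} eV$ are preserved, as are the induced subfiltrations $UJ^i \cap Ue$ and $J^i V \cap eV$. Consequently the subgroup decomposes as a direct product $\prod_{e \in \mathcal{E}} G_e$, and it suffices to identify each factor $G_e$.

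Next, fixing $e$, I would exploit $eSe \cong M_{d_e}(\Delta_e)$. Conjugation by $\sigma \in G_e$ induces a $k$-algebra automorphism $\varphi$ of $eSe$; by Skolem--Noether (together with the description of outer automorphisms of a matrix algebra over a division ring), $\varphi$ is implemented by a $\Delta_e$-semilinear transformation $\psi \in \Gamma {\rm L}_{d_e}(\Delta_e)$ of the simple module $V_e = \Delta_e^{d_e}$, unique modulo $k^{\times}$. Via the identifications $X_i e \cong V_e \otimes_k k^{m_i(e)}$ and $eY_i \cong V_e^* \otimes_k k^{n_i(e)}$ of \eqref{eq:x-y}, this single $\psi$ controls the action of $\sigma$ on the ``simple'' tensor factor of every $X_i e$ and every $eY_i$ simultaneously, and the coherence of $\sigma$ on both $U$ and $V$ forces the same $\psi$ on both sides. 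The remaining data in $\sigma$ is captured by the centralizer of $eSe$ acting on the multiplicity spaces, which contributes a factor of $\prod_{i=0}^c \GL_{m_i(e)}(k) \times \GL_{n_i(e)}(k)$. Assembling $\psi$ with the centralizer part, and quotienting by the overlap of $k^{\times}$-scalars embedded diagonally in both, yields the claimed tensor product group.

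The main obstacle will be the bookkeeping for the filtration. A priori, $\sigma|_{Ue}$ is only upper triangular with respect to the graded pieces $X_i e$, so its diagonal blocks yield the tensor form above while its off-diagonal pieces are $\varphi$-intertwiners between distinct $X_i e$'s. I would handle this by choosing the splittings $X_i$, $Y_i$ of the radical series in a $G_e$-equivariant fashion (possible because different Wedderburn complements of $J$ are conjugate by elements of $1+J$), absorbing the off-diagonal data into the choice of splitting. With such a choice, the residual $G_e$-action on $(Ue, eV)$ is genuinely block-diagonal and realises the stated tensor product group.
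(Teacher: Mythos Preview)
Your argument follows the paper's closely: reduce to a single $e$ via the idempotent decomposition, extract the $\Gamma\mathrm{L}_{d_e}(\Delta_e)$ factor from Skolem--Noether, and identify what remains (the part centralizing $eSe$) with the $\GL$-action on the multiplicity spaces. The paper does exactly this, assembling the semilinear pieces into a diagonal $\tau=(\tau_e)_e$ and observing that $\tau'=(\varphi,\psi)\tau^{-1}$ centralizes $S$ and hence acts on each $X_ie$ as $1\otimes_k\GL_{m_i(e)}(k)$.

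Where you go further is in flagging the off-diagonal issue: an $S$-centralizing element of $N(S;J)$ is only required to preserve the flag $\{UJ^ie\}$, not the chosen complements $X_ie$, so a priori it carries strictly upper-triangular data. The paper does not comment on this; it simply says $\tau'$ ``acts on $X_ie$'' and reads off the diagonal block. Your instinct that something is being elided is sound. However, your proposed repair does not work as written. Mal'cev's theorem concerns Wedderburn complements to $J$ inside the \emph{algebra} $A$, not $S$-module complements to $UJ^{i+1}$ inside $UJ^i$, so it gives no leverage on the choice of $X_i$; and asking for the splittings to be ``$G_e$-equivariant'' is circular, since $G_e$ is precisely the group you are trying to pin down. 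The honest resolution is either to read the lemma as describing the Levi quotient of the flag stabilizer (which is all that \thmref{thm:norm} actually uses), or to record the unipotent radical separately as the evident product of $\Hom_S(X_ie,X_je)$-groups for $i<j$.
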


\begin{proof}
Let $(\varphi,\psi)\in N(S;J)$ be such that $eSe^{(\varphi,\psi)}=eSe$ for every $e\in\mathcal{E}$.  
It follows that, for each $e\in\mathcal{E}$, $(\varphi,\psi)$ induces a ring automorphism
$\tau_e$ of $eSe\cong \M_{d_e}(\Delta_e)$.  The Skolem-Noether theorem \cite{CR}*{(3.62)} 
shows that
$\tau_e$ is conjugation by  a $\Delta$-semilinear transformation.  
Also, ${\rm \Gamma L}_{d_e}(\Delta_e)$ acts diagonally on 
$U e$, isomorphic as $\Delta_e$-module to 
$\Delta_e^{d_e (m_0+\cdots +m_c)}$, and also on $eV$, isomorphic to $\Delta_e^{d_e (n_0+\cdots + n_c)}$.

Let $\tau=(\tau_e\colon e\in\mathcal{E})\in \End(U)\times \End(V)^{\rm op}$.  Notice $eSe^{(\varphi,\psi)}=eSe^{\tau}$ and 
$(X_i\tau, \tau Y_i)=(X_i, Y_i)$ so $\tau\in N(S;J)$.  Finally, 
$\tau'=(\varphi,\psi)\tau^{-1}$ centralizes $S$ and lies in $N(S;J)$.  Therefore, 
$\tau'$ is the identity on the $S$-simple submodules  of the $S$-semisimple modules $X_i$ and $Y_i$.  In particular, 
$\tau'$ acts on $X_i e$ as $1\otimes_k \GL_{m_i(e)}(k)$ and on $eY_i$ as $1\otimes_k \GL_{n_i(e)}(k)$, 
in the decomposition of \eqref{eq:x-y}.
\end{proof}

We can now state the full structure theorem for $N(A)$.

\begin{thm}
\label{thm:norm}
Let  $A$ be a subalgebra of $\End(U)\times \End(V)^{\rm op}$.  
Let $J=J(A)$ be the Jacobson radical of $A$, and $S$ a semisimple complement to $J$ in $A$. 
Let $\mathcal{E}$ be the set of central-primitive idempotents of $S$, and $N(S;J)$ the group defined
in (\ref{eq:define-NSJ}). Then the following hold.
\begin{enumerate}[(i)]
\item $N(A)=\langle 1+J, N(S)\cap N(A)\rangle$.

\item $N(S)\cap N(A)=\Stab_{N(S)}(J)=\{ (x,y)\in N(S) : J^{(x,y)}=J\}$.

\item For each 
$e\in\mathcal{E}$ there is a positive integer $d_e$ and a finite-dimensional division 
$k$-algebra $\Delta_e$ such that $eSe\cong \M_{d_e}(\Delta_e)$ and
\begin{align*}
	\prod_{e\in\mathcal{E}} \GL_{d_e}(\Delta_e) & \leq N(S)\cap N(A)  \leq N(S;J).
\end{align*}

\item Let $\mathcal{F}=\{\sum_{e'\sim e}e'\colon e\in\mathcal{E}\}$. 
Then $N(S;J)=\prod_{f\in\mathcal{F}}N(fSf;J)$, where
\begin{align*}
	N(fSf;J) & = \Stab_{N(fSf)}(\{(UJ^if,fJ^iV): 0\leq i\leq c\}).
\end{align*}

\item Let $f\in\mathcal{F}$, and suppose $f=\sum_{e'\sim e}e'$ for some $e\in\mathcal{E}$.
Put $d_f=d_e$, $\Delta_f=\Delta_e$, $r_f=|\{e'\in\mathcal{E}\colon e'\sim e\}|$,
$d_f m_i(f)=\dim_{\Delta_e} UJ^i e/UJ^{i+1}e$, and
$d_f n_i(f)=\dim_{\Delta_e} eJ^iV/eJ^{i+1}V$.  Then
\begin{align*}
	N(fSf;J) & = \left( {\rm \Gamma L}_{d_f}(\Delta_f) \otimes_k 
		\prod_{j=0}^c \GL_{m_{i}(f)}(k)\times \GL_{n_{i}(f)}(k)\right)\wr S_{r_f}.
\end{align*}
\end{enumerate}
\end{thm}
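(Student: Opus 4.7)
The plan is to exploit the Wedderburn--Malcev decomposition $A = J \oplus S$ together with the characteristic nature of each power $J^i$, peeling off layers of the normaliser until we reach a level controlled by Skolem--Noether and by Lemmas \ref{lem:perms} and \ref{lem:final-stab}. Parts (i)--(iii) reduce the problem successively to $N(S)\cap N(A)$, to $\Stab_{N(S)}(J)$, and to the normaliser of the radical filtration; parts (iv) and (v) then deliver the final factorisation over the equivalence classes $\mathcal{F}$.

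For (i), an element $(f,g) \in N(A)$ induces an automorphism $\alpha = \Inn((f,g))$ of $A$ that preserves $J$, so $\alpha(S)$ is another Wedderburn complement to $J$. The Malcev--Wedderburn theorem \cite{CR}*{Section 6} supplies $u \in 1+J \leq A^\times$ with $u\alpha(S)u^{-1}=S$; regarding $u \in A \subseteq \End(U)\times\End(V)^{{\rm op}}$ as a unit in $\Aut(U)\times\Aut(V)$, the product $(f,g)u$ lies in $N(S) \cap N(A)$, and $(f,g)=((f,g)u)\cdot u^{-1}$ displays $(f,g)$ as an element of $\langle 1+J, N(S)\cap N(A)\rangle$. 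Part (ii) is then immediate: $J$ is characteristic in $A$, so $N(S)\cap N(A) \subseteq \Stab_{N(S)}(J)$, and conversely $A = S + J$ implies that an element of $N(S)$ normalising $J$ also normalises $A$. For (iii), the upper bound holds because normalising $J$ preserves every $J^i$ and hence every $UJ^i$ and $J^iV$; the lower bound follows by embedding each $\GL_{d_e}(\Delta_e)=(eSe)^\times$ into $S^\times$ via $u_e \mapsto u_e+(1-e)$, since units of $A$ act by inner automorphism and so fix $S$ and $J$ setwise.

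For (iv), Lemma \ref{lem:perms}(ii) asserts that the orbits of $N(S;J)$ on $\mathcal{E}$ are exactly the $\sim$-classes, so every element of $N(S;J)$ stabilises each $f \in \mathcal{F}$. The central orthogonal idempotents $\{f : f\in\mathcal{F}\}$ induce $N(S;J)$-invariant decompositions $U = \bigoplus_f Uf$ and $V = \bigoplus_f fV$, on which the action factors as a product of elements of $N(fSf;J)$, yielding the direct product. For (v), the permutation action of $N(fSf;J)$ on the $r_f$ idempotents within the class of $e$ surjects onto $S_{r_f}$ by Lemma \ref{lem:perms}(ii); the kernel consists of the elements stabilising each $e_iSe_i$ individually, and Lemma \ref{lem:final-stab} applied to $fSf$ identifies this kernel with $\prod_{i=1}^{r_f} \bigl({\rm \Gamma L}_{d_f}(\Delta_f)\otimes_k \prod_{j=0}^c \GL_{m_j(f)}(k)\times\GL_{n_j(f)}(k)\bigr)$. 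Choosing the permutation representatives produced in Lemma \ref{lem:perms}(ii) consistently, so that transpositions become involutions, splits the extension.

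The main obstacle is the identification of this split extension as a wreath product: one must verify that $S_{r_f}$ acts on the kernel by the natural permutation of its $r_f$ isomorphic factors rather than by an additional outer twist. The definition of $\sim$ is precisely tuned for this, since all $e_iSe_i$ in a class share the invariants $d_f,\Delta_f,m_j(f),n_j(f)$; the semilinear isomorphisms constructed in Lemma \ref{lem:perms}(ii) can therefore be coordinated across the class so that the constructed copy of $S_{r_f}$ permutes the factors of the kernel regularly, completing the wreath-product description.
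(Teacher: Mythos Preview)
Your proof is correct and follows essentially the same route as the paper: Malcev's transitivity of $1+J$ on Wedderburn complements for (i), the decomposition $A=S+J$ for (ii), $S^\times$ and the characteristic radical series for (iii), and Lemmas~\ref{lem:perms} and~\ref{lem:final-stab} for (iv)--(v). Your treatment of (iv)--(v) is in fact more explicit than the paper's, which simply records that those parts ``follow directly'' from the two lemmas; your discussion of why the extension by $S_{r_f}$ splits and why the resulting action is the regular permutation action (hence genuinely a wreath product) fills in a point the paper leaves to the reader.
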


\begin{proof}
For (i), we recall the theorem of Mal'cev asserting that $1+J\leq A^{\times}\leq N(A)$ acts transitively on 
the Wedderburn decompositions of $A$. Thus, for each $\varphi\in N(A)$ there exists $z\in J$ with 
$S^{\varphi}=S^{1+z}$, so that $(1+z)\varphi^{-1}\in N(S)\cap N(A)$.

For (ii),  since $N(S)\cap N(A)$ acts as ring automorphisms on $A$, it follows that $N(S)\cap N(A)\leq \Stab_{N(S)}(J)$.
Also, if $(\varphi,\psi)\in \Stab_{N(S)}(J)$ and $(x,y)\in A$, then $(x,y)=(w+s,z+t)$ for $w,z\in J$ and $s,t\in S$.  
As $(w,z)^{(\varphi,\psi)}\in J$ and $(s,t)^{(\varphi,\psi)}\in S$, we have 
$(x,y)^{(\varphi,\psi)}=(w,z)^{(\varphi,\psi)}+(s,t)^{(\varphi,\psi)}\in J+S=A$,  
so that $(\varphi,\psi)\in N(A)$.

For (iii), we have $S=\bigoplus_{e\in\mathcal{E}} eSe$ with $eSe\cong \M_{d_e}(\Delta_e)$. Hence, 
$\prod_{e\in\mathcal{E}} \GL_{d_e}(\Delta_e)=S^{\times}\leq N(S)\cap N(A)$.  
Clearly, $N(S)\cap N(A)$ stabilizes the radical series (\ref{eq:rad-series}), so
$N(S)\cap N(A)\leq N(S;J)$.

Finally, (iv) and (v) follow directly from Lemmas~\ref{lem:perms} and \ref{lem:final-stab}.
\end{proof}

\subsection{An algorithm to construct \boldmath$N(A)$}
\label{subsec:alg-norm}
In this section, we present an algorithmic version of \thmref{thm:norm}. Although we anticipate practical
uses for such an algorithm as a stand-alone function, it is already proving to be a valuable component
in the algorithmic study of $p$-groups. We discuss this matter further in the concluding section.
The complexity of the algorithm is difficult to predict, but it
is roughly a function of the size of the Jacobson radical of $\Adj(\circ)$.  For convenience, we shall
mostly think of $k$ in this section as a finite field, although extensions to algebraic number fields are possible.
\medskip

Let $A\leq\M_a(k)\times\M_b(k)$ be given (we assume, as the enveloping algebra of some set of generators).
First, we compute a Wedderburn decomposition $A=J\oplus S$, along with a decomposition of $S$ into its minimal ideals
$\{eSe:e\in\mathcal{E}\}$, where $\mathcal{E}$ is the set of central-primitive idempotents of $S$.  R\'{o}nyai has shown that 
the complexity of decomposing algebras in this way is essentially that of factoring
polynomials over $k$ \cite{Ronyai}.  If $k$ is a finite field, this can be done in polynomial time using randomized
algorithms of the {\em Las Vegas} variety. 
(Such algorithms only return answers that are correct, but there is also a small chance that failure is reported.) 
Details may be found in
\citelist{\cite{CI:algebra}*{Section 4}\cite{EG:semisimple}\cite{Iv:algebra}}.  

Next, the idempotents $\mathcal{E}$ are partitioned to form the $\mathcal{F}$ of 
\thmref{thm:norm}(iv).
This applies the equivalence relation $\sim$ of \lemref{lem:perms}.
In particular, if $e,e'\in\mathcal{E}$ then $e\sim e'$ requires only that $d_e=d_{e'}$, that 
$\Delta_e\cong \Delta_{e'}$, and that dimensions of various $\Delta_e$-vector spaces agree.  
Of those requirements, 
the only significant challenge is to test whether $\Delta_e\cong \Delta_{e'}$. 

In fact, to build the permutations
in $N(S;A)$ promised by \lemref{lem:perms}, we really need an explicit isomorphism between the two
division algebras.  
In the case when $k$ is a finite field, isomorphism type is determined by dimension, and
we require an isomorphism between field
extensions $K$ and $L$ of $k$. In the context of the algorithm, the extensions $K$ and $L$ are 
specified, respectively, 
by generators $\rho$ and $\mu$ (matrices of the same degree with entries in $k$).
We use the following idea suggested to us by W.M. Kantor:
compute the minimal polynomial of $\rho$ over the base field $k$, and factor this polynomial
over $L$; then, for any root $\tau\in L$, the assignment $\rho\mapsto \tau$ determines a linear transformation
conjugating $K$ to $L$.  
\smallskip

We remark that isomorphism testing of general division algebras over $\mathbb{Q}$ is not known to be easy
 except for quaternionic instances \cite{IR:quat}.
\smallskip

The final step, for each $e\in\mathcal{E}$ and $0\leq i\leq c$, is to decompose 
$X_ie =\Delta_e^{d_e}\otimes_k k^{m_i(e)}$ 
and $eY_i=\Delta_e^{d_e}\otimes_k k^{n_i(e)}$.  This is done with a randomized Las Vegas algorithm 
known as the MeatAxe \citelist{\cite{HR:meataxe}\cite{IL:meataxe}}.  The action of 
${\rm \Gamma L}_{d_e}(\Delta_e)\otimes (\GL_{m_i(e)}(k)\oplus \GL_{n_i(e)}(k))$ on $(X_ie, eY_i)$ 
is then immediate from the decomposition.  We have thus proved:

\begin{thm}
For finite fields $k$,
there is a polynomial-time Las Vegas algorithm that given a $k$-subalgebra $A\leq \M_u(k)\times\M_v(k)$
computes generators for the group $\langle 1+J, N(S;J)\rangle$, along with its order and composition factors.
\end{thm}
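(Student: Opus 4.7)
The plan is to verify that each step in the algorithmic sketch preceding the theorem is a polynomial-time Las Vegas procedure, and that chaining them together yields the promised generators, order, and composition factors.

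First I would invoke the algorithms of R\'onyai and the other authors cited in the text to compute, in polynomial-time Las Vegas, the Wedderburn decomposition $A=J\oplus S$, the set $\mathcal{E}$ of central-primitive idempotents of $S$, and explicit ring isomorphisms $eSe\cong \M_{d_e}(\Delta_e)$ for each $e\in\mathcal{E}$. Since $k$ is finite, every $\Delta_e$ is a finite field extension of $k$ and is determined up to isomorphism by $[\Delta_e\colon k]$, so the partition $\mathcal{F}$ induced by the equivalence $\sim$ of \lemref{lem:perms} can be formed simply by matching the integer invariants $d_e$, $[\Delta_e\colon k]$ and the dimensions $\dim X_i e$, $\dim e Y_i$ read off from a block-triangular presentation of $A$ relative to bases exhibiting \eqref{eq:define-Xi-Yi}.

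Next, for each equivalent pair $e\sim e'$ I would realise an explicit field isomorphism $\Delta_e\to\Delta_{e'}$ via the observation attributed to Kantor in the text: compute the minimal polynomial of a generator $\rho$ of $\Delta_e$ over $k$, factor it over $\Delta_{e'}$ by a Berlekamp-style randomized factoring algorithm, and send $\rho$ to any root. Following the construction in \lemref{lem:perms}, these isomorphisms lift to concrete elements of $N(S;J)$ realising the permutation part $S_{r_f}$ of the wreath product in \thmref{thm:norm}(v). For each pair $(e,i)$, I would run the MeatAxe of Holt--Rees / Ivanyos--Lux on the $S$-modules $X_i e$ and $e Y_i$ to expose their isotypic decompositions $\Delta_e^{d_e}\otimes_k k^{m_i(e)}$ and $\Delta_e^{d_e}\otimes_k k^{n_i(e)}$, after which the natural actions of ${\rm \Gamma L}_{d_e}(\Delta_e)$ and of $\GL_{m_i(e)}(k)\times\GL_{n_i(e)}(k)$ are written down as explicit matrix generators. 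A $k$-basis of $J$ furnishes generators of the unipotent normal subgroup $1+J$, and together with the preceding data this produces a generating set for $\langle 1+J,\,N(S;J)\rangle$.

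The order and a composition series of the output are then read directly from the wreath structure of \thmref{thm:norm}(iv)--(v): the orders of $\bigl({\rm \Gamma L}_{d_f}(\Delta_f)\otimes_k \prod_i(\GL_{m_i(f)}(k)\times\GL_{n_i(f)}(k))\bigr)\wr S_{r_f}$ and of the $p$-group $1+J$ have closed-form expressions, and every composition factor lies in a standard family (simple classical quotients of the $\GL$ factors, cyclic Galois groups $\Gal(\Delta_f/k)$, alternating groups, and elementary abelian layers of $1+J$). The main obstacle is the ring-isomorphism step: over a finite field Kantor's trick reduces it to univariate polynomial factoring and is therefore cheap, whereas over $\mathbb{Q}$ or a general number field this is not known to be polynomial, which is precisely why the theorem is restricted to finite $k$. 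Each remaining ingredient is a standard polynomial-time Las Vegas subroutine from the cited literature, and their composition proves the theorem.
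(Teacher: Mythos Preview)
Your proposal is correct and follows essentially the same approach as the paper: the paper's proof is precisely the algorithmic discussion immediately preceding the theorem statement (Wedderburn decomposition via R\'onyai et al., partitioning $\mathcal{E}$ into $\mathcal{F}$, Kantor's field-isomorphism trick to realise the permutations of \lemref{lem:perms}, and the MeatAxe to expose the tensor decompositions of $X_ie$ and $eY_i$), culminating in ``We have thus proved''. Your write-up reproduces each of these steps faithfully and adds the natural observation that the order and composition factors are read off from the explicit wreath description in \thmref{thm:norm}(iv)--(v).
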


Finally, by \thmref{thm:norm}(ii), to build $N(A)=\langle 1+J, \Stab_{N(S;J)}(J)\rangle$ one must next construct 
$\Stab_{N(S;J)}(J)$.  In general, it seems that one can do little better than simply to 
build a permutation presentation of $N(S;J)$ on $J$ and compute the stabilizer as a permutation group, 
of course taking advantage  of the decomposition in \thmref{thm:norm}(iii).  
The problem of finding stabilizers in permutation groups is 
thought to be difficult  \cite{Luks}*{Section 4}, and we do not expect an efficient general solution to the problem
(see, for example, the construction in Section~\ref{subsec:example}). 
We have, however, established the following result.

\begin{thm}
\label{thm:norm-alg}
There is a polynomial-time Las Vegas algorithm that, given a semisimple subalgebra 
of $\End(U)\times \End(V)^{\rm op}$, where $U$ and $V$ finite-dimensional vector spaces over a finite
field, constructs generators for $N(A)$.
\end{thm}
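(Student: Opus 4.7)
The plan is to derive this theorem almost immediately from the preceding theorem, observing that semisimplicity of $A$ eliminates the one computational obstruction that blocks a general polynomial-time result. Since $J = J(A) = 0$, we have $A = S$, $c = 0$, and the radical series collapses to $U \oplus V > 0$; Theorem \ref{thm:norm}(i)--(ii) then gives $N(A) = N(S) \cap N(A) = \Stab_{N(S)}(0) = N(S)$, and the group $N(S;J)$ of (\ref{eq:define-NSJ}) likewise reduces to $N(S)$. Hence $N(A) = N(S;J) = \langle 1+J, N(S;J)\rangle$, and the preceding theorem already produces a generating set for this group in polynomial-time Las Vegas, so no stabiliser computation is required.

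To spell out the algorithm, first I invoke R\'onyai's procedure \cite{Ronyai} (see also \cite{CI:algebra}, \cite{EG:semisimple}, \cite{Iv:algebra}) to compute the central-primitive idempotents $\mathcal{E}$ of $A$, together with explicit isomorphisms $eAe \cong \M_{d_e}(\Delta_e)$. Because $k$ is finite, each $\Delta_e$ is a finite extension of $k$, classified up to isomorphism by $\dim_k \Delta_e$, so the equivalence relation $\sim$ of Lemma \ref{lem:perms} is decided by comparing integer invariants and the partition $\mathcal{F}$ is read off at once. For each pair $e \sim e'$ I then build an explicit isomorphism $\Delta_e \to \Delta_{e'}$ following Kantor's suggestion: factor a minimal polynomial of a primitive element of $\Delta_e$ over $\Delta_{e'}$ and send the primitive element to any root. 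Combined with a MeatAxe decomposition \cite{HR:meataxe} (see also \cite{IL:meataxe}) of $Ue$ and $eV$ as $eSe$-modules, these isomorphisms yield the block-diagonal generators of $\Gamma \mathrm{L}_{d_e}(\Delta_e) \otimes_k \bigl(\GL_{m_0(e)}(k) \times \GL_{n_0(e)}(k)\bigr)$ together with the permutation generators of $S_{r_f}$ for each class $f \in \mathcal{F}$, as prescribed by Theorem \ref{thm:norm}(iv)--(v).

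The main obstacle in the general problem -- evaluating $\Stab_{N(S;J)}(J)$ as a permutation-group stabiliser -- simply does not arise in the semisimple setting, which is precisely why a clean polynomial-time result is available here and not in general. Every remaining ingredient (Wedderburn decomposition, division-algebra isomorphism via polynomial factorisation, MeatAxe) is polynomial-time Las Vegas over a finite field, and the number of generators needed is bounded by the explicit wreath-product description of Theorem \ref{thm:norm}(v), so the combined procedure runs in polynomial-time Las Vegas as claimed.
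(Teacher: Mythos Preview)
Your proposal is correct and matches the paper's (implicit) argument exactly: the paper states the theorem immediately after noting that the only obstruction to polynomial time is the computation of $\Stab_{N(S;J)}(J)$, and when $A$ is semisimple this stabiliser is trivial since $J=0$, so the preceding theorem already yields $N(A)=N(S;J)$. You have simply made explicit what the paper leaves as a one-line observation.
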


\subsection{An example}
\label{subsec:example}
We conclude this section with a construction which shows that 
computing $N(A)$ is at least as hard as computing $\Aut(\circ)$ 
for an arbitrary bimap $\circ$, and that the latter is essentially a generic
``quadratic stabilizer" problem for which no efficient solution is known (details in Section~\ref{sec:quad-stab}).
Thus, it is likely not through a lack of understanding that we have failed
to achieve polynomial time for the general problem.  
We stress, however, that not all rings are adjoint rings, and in fact the rings we construct are not known
to be adjoint rings. Hence, although the examples in our family give some
indication of the difficulty of constructing $\Aut(\otimes_S)$ for $S\subset\End(U)\times \End(V)^{\rm op}$,
they do not completely settle the matter.
\smallskip

Fix a field $k$, any bimap $\circ\colon U\times V\to W$, where $U,V$ and $W$ are finite-dimensional
$k$-spaces, and ordered bases $\mathcal{X}$ and $\mathcal{Y}$ for $U$ and $V$ respectively. 

For each $\varphi\in W^*=\hom_k(W,k)$, let
$M(\circ^{\varphi})$ denote the {\em Gram matrix} of the $k$-bilinear
form $\circ^{\varphi}$, whose $(x,y)$-entry ($x\in \mathcal{X}$, $y\in\mathcal{Y}$) is
$(x\circ y)\phi\in k$.  The {\em Gram representation} of $\circ$ is then defined as
\begin{align}
	W^{\circ} & = \{ M(\circ^{\varphi}) : \varphi\in W^*\}\leq \M_{a\times b}(k),
\end{align}
where $a=\dim_k U$ and $b=\dim_k V$.  Observe that $(f,g;h)\in \Aut(\circ)$ if, and only if, the matrices
$(F,G)$ corresponding to $(f,g)$ satisfy the condition
\begin{align}
	FW^{\circ} G & = W^{\circ}.
\end{align}
Now define 
\begin{align*}
	A & = 
\left\{~\left(\begin{bmatrix} a1_U & Z \\ 0 & b1_V \end{bmatrix},
	\begin{bmatrix} a1_U & 0\\ Z^t & b1_V \end{bmatrix} \right) 
	\colon a,b\in k,\;Z\in W^{\circ}\right\}\leq \M_{a+b}(k)\times\M_{a+b}(k).
\end{align*}
Then
$J=J(A)=\left\{ \left(\left[ \begin{smallmatrix} 0 & Z \\ 0 & 0 \end{smallmatrix} \right],
\left[\begin{smallmatrix} 0 & 0 \\ Z^t & 0 \end{smallmatrix}\right]\right) \colon Z\in W^{\circ}\right\}$, so
\begin{align*}
	N(S;J) & = 
		\left\{ \left(\begin{bmatrix} F & 0 \\ 0 & G \end{bmatrix},
				\begin{bmatrix} F^{-t} & 0 \\ 0 & G^{-t} \end{bmatrix}\right)
	 \colon F\in\GL(a,k), 
			G\in\GL(b,k)\right\}, \textnormal{ and }\\
	N(S)\cap N(A) & = \left\{ \left(\begin{bmatrix} F & 0 \\ 0 & G^{-t} \end{bmatrix},
				\begin{bmatrix} F^{-t} & 0 \\ 0 & G\end{bmatrix}\right) \colon
		FW^{\circ} G = W^{\circ}\right\}\cong \Aut(\circ).
\end{align*}  
Thus, in order to construct $N(S)\cap N(A)$ from $N(S;J)$, one must solve a generic
stabilizer problem of the form $FW^{\circ} G = W^{\circ}$.

\section{pseudo-isometries and $*$-normalisers}
\label{sec:pseudo}
In this section we consider bimaps that possess a certain form of symmetry. We say that
$\circ\colon V\times V\to W$ is {\em Hermitian} if there exists $\theta\in\GL(W)$ such
that, for all $u,v\in V$, $u\circ v=(v\circ u)^{\theta}$. Such bimaps, which include the
more familiar reflexive forms, were studied
in~\cite{BW:find-isom}, where the groups
\begin{equation}
\label{eq:isom}
\begin{split}
	\Isom(\circ) & =\left\{ f\in \Aut(V)\colon~\forall u,v\in V,~
		uf\circ  vf= (u\circ v)\right\}\\
		& = \{ f \colon (f,g;h)\in \Aut (\circ),~ f=g~\mbox{and}~h=1\}
\end{split}
\end{equation}
of {\em isometries} of $\circ$ were described, and then used to construct intersections of classical groups. 
However, there are crucial applications of 
Hermitian bimaps -- notably to automorphism groups of $p$-groups (see Section~\ref{sec:apps}) -- 
that involve a broader (but still restricted) type of autotopism,
called a {\em pseudo-isometry}. We therefore study the group of all pseudo-isometries,
namely
\begin{equation}
\label{eq:pseudo}
\begin{split}
	\pseudo(\circ) & =\left\{ (f;\hat{f})\in \Aut(V)\times \Aut(W)\colon~\forall u,v\in V,~
		uf\circ  vf= (u\circ v)^{\hat{f}}\right\}\\
		& = \{ (f;h) \colon (f,g;h)\in \Aut (\circ),~ f=g\}.
\end{split}
\end{equation}
As in the case of a general bimap, we propose to study Hermitian bimaps by factoring through an 
associated tensor product. In view of that, and of the specific applications we have in mind, 
we restrict our attention to bimaps that
are either {\em symmetric} ($u\circ v=v\circ u$ for all $u,v\in V$), or {\em alternating} 
($v\circ v=0$ for all $v\in V$). The tensor products associated to  symmetric and alternating 
bimaps are equipped with the same symmetry property, 
and we denote them $\wedge^+$ and $\wedge^-$, respectively.
\smallskip

Once again, we form tensors over the adjoint algebra, $\Adj(\circ)$, of the bimap $\circ$.
The symmetric nature of $\circ$ means that $(x,y)\in\Adj(\circ)$ if, and only if,
$(y,x)\in\Adj(\circ)$. If, in addition, $\circ$ is nondegenerate, then $y$ is uniquely determined by $x$.
Hence $x^*:=y$ defines an anti-automorphism of $\Adj(\circ)$ of order at
most 2, giving it the structure of a $*$-ring. 
If $A\subseteq \End(V)$ is a $*$-ring, then the normaliser of $A$ in~\eqref{eq:norm A} becomes
\begin{equation}
	N^*(A ) = \{ g\in \Aut(V) \colon (y^g)^*=(y^*)^g\in S~\mbox{for all}\;y\in A\}.
\end{equation}

\subsection{Proof of \thmref{thm:main2}}
We now prove our analogue of Theorem~\ref{thm:main1} for symmetric and exterior tensor products.

\begin{proof}
If $\circ\colon V\times V\to W$ is nondegenerate symmetric or alternating then $\circ$ factors through
$\wedge^{\pm}_{\Adj(\circ)}$.  To be equivalent to a tensor product, 
$\hat{\circ}\colon V\wedge_{\Adj(\circ)} V\to W$  must be an isomorphism.

Now suppose that $\circ=\wedge_A^{\pm}$ for a $*$-algebra $A\leq \End(V)$.
Let $\varphi\in N^*(\Adj(\wedge_S))$.  Then $(\varphi,\varphi)\in N(\Adj(\wedge_S))$ and so, 
as in \thmref{thm:autotope}, there is an induced linear mapping $\hat{\varphi}\in \GL(V\otimes_S V)$ 
defined by $(u\otimes v)^{\hat{\varphi}}=u\varphi \otimes v\varphi$.  This map also satisfies
$(u\wedge u)^{\hat{\varphi}}=0$, so we can induce $\hat{\varphi}$ on $V\wedge_S V$.
Now $(\varphi,\hat{\varphi})\in \Psi\Isom(\wedge^{\pm}_S)$.  
In this way, $N^*(\Adj(\wedge^{\pm}_S))\subseteq \Psi\Isom(\wedge^{\pm}_S)|_{\Aut(V)}$.
\end{proof}

\subsection{\boldmath$*$-algebra normalisers}
\label{subsec:norm-star-struc}
In \thmref{thm:main2} we demonstrated that pseu\-do-i\-som\-e\-tries of alternating tensor products are essentially
$*$-normalisers of the adjoint ring of the tensor product.    
We now give a structural description of the $*$-normaliser of an algebra
of matrices; in Section~\ref{subsec:alg-norm-star} we describe an algorithm to construct this group.
\smallskip

We adapt the notation set up in Section~\ref{subsec:norm-struc} 
to $*$-algebras.
Let $A\leq\Bbb{M}_d(k)$ be a $*$-algebra, where $k=2k$ is a field (we exclude fields of
characteristic $2$).  By a result of Taft~\cite{Taft:complements}, $A$ possesses a {\em $*$-invariant}
(semisimple) complement, $S$, to its Jacobson radical $J=J(A)$.  

We also require $\mathcal{E}$
to consist of $*$-invariant central-primitive idempotents. This set is obtained from $\mathcal{E}_0$,
the set of central-primitive idempotents of the {\em ring} $A$ (ignoring $*$ temporarily) as follows.
Put $\mathcal{I}_0=\{e\in\mathcal{E}_0\colon e^*=e\}$ and 
$\mathcal{J}_0=\{e+e^*\colon e\in\mathcal{E}_0-\mathcal{I}_0\}$.
Then $\mathcal{E}:=\mathcal{I}_0\cup\mathcal{J}_0$ is the desired set of $*$-invariant central-primitive idempotents.
In particular, $eSe$ is a minimal $*$-ideal, for every $e\in \mathcal{E}$.  

Our initial partition of idempotents is a little more refined than for ordinary rings. 
Each $*$-simple $*$-subring $eSe$, for $e\in\mathcal{E}^*$, has an associated pair, $(d_e,\mathcal{O}_e)$, 
of parameters, where $d_e$ is a positive integer, and $\mathcal{O}_e$ is a $*$-algebra
whose non-trivial $*$-invariant elements are invertible.
Osborn has classified such rings $\mathcal{O}$ and so we refer these as {\em Osborn pseudo-division algebras}~\cite{Osborn}.
To avoid confusion, we denote the involution in $\mathcal{O}$ as $s\mapsto \bar{s}$.  

Define the usual Hermitian $\mathcal{O}$-forms as bimaps 
$\bullet\colon\mathcal{O}^{d}\times \mathcal{O}^{d}\to\mathcal{O}$ where for some $M=\bar{M}^t\in \M_{d}(\mathcal{O})$,
\begin{align}
	(\forall & u,v\in \mathcal{O}^d) & u\bullet v & = u M\bar{v}^t.
\end{align}
As shown in \cite{Wilson:unique-cent}*{Section 4.5}, for every $e\in\mathcal{E}$ there is a unique Osborn
division algebra $\mathcal{O}_e$, a rank $d_e$, and a nonsingular $M=\bar{M}^t\in \M_d(\mathcal{O})$ such that
\begin{align*}
	eSe & \cong \Adj(\bullet:\mathcal{O}_e^{d_e}\times \mathcal{O}_e^{d_e}\to\mathcal{O}_e) 
		\cong \langle \M_d(\mathcal{O}), X\mapsto M\bar{X}^t M^{-1}\rangle\\
	(eSe)^{\#} & = \{ x\in eTe^{\times} \colon x x^*=1\} = \Isom(\bullet:\mathcal{O}_e^{d_e}\times \mathcal{O}_e^{d_e}\to\mathcal{O}_e)\\
		& = \{ g\in \GL_d(\mathcal{O}) \colon gM\bar{g}^t=M\}.
\end{align*}
Define $S$-submodules $X_0,\ldots,X_c$ of $V$ as in~(\ref{eq:define-Xi-Yi}), 
where $VJ^i=X_i\oplus\ldots\oplus X_c$
for each $0\leq i\leq c$. Finally, 
define an equivalence relation $\sim$ on $\mathcal{E}$, where $e\sim e'$ if, and only if,
$eSe$ and $e'Se'$ are isomorphic as $*$-rings (that is, $d_e=d_{e'}$ and $\mathcal{O}_e$ and $\mathcal{O}_{e'}$
are isomorphic Osborn pseudo-division algebras) and, for all $i\in\{0,\ldots,c\}$, $\dim X_ie=\dim X_ie'$.

The following is our $*$-analogue of~\thmref{thm:norm}.

\begin{thm}
\label{thm:norm-star}
Let $A$ be a $*$-subalgebra of $\End(V)$.
Let $J=J(A)$ be the Jacobson radical of $A$, and $S$ a semisimple $*$-invariant complement to $J$ in $A$.
Let $\mathcal{E}$ be the set of $*$-invariant central-primitive idempotents of $S$. 
Then the following hold.
\begin{enumerate}[(i)]
\item $N^*(A)=\langle \{z+\sqrt{1+z^2}: z\in J, z^*=-z\}, N^*(S)\cap N^*(A)\rangle$.

\item $N^*(S)\cap N^*(A)=\Stab_{N^*(S)}(J^-)\cap\Stab_{N^*(S)}(J^+)$.

\item For each 
$e\in\mathcal{E}$ there is a positive integer $d_e$, a finite-dimensional Osborn pseudo-division 
$k$-algebra $\mathcal{O}_e$, and an Hermitian $\mathcal{O}$-form 
$\bullet_e\colon \mathcal{O}_e^{d_e}\times \mathcal{O}_e^{d_e}\to \mathcal{O}_e$  such that 
$eSe\cong \Adj(\bullet_e)$ (as $*$-algebras) and
\begin{align*}
	\prod_{e\in\mathcal{E}} \Isom(\bullet_{e}) & \leq N^*(S)\cap N^*(A)  \leq \Stab_{N^*(S)}(VJ^i).
\end{align*}

\item Let $\mathcal{F}=\{\sum_{e\sim e'}e'\colon e\in\mathcal{E}\}$.
Then $N(S;J)=\prod_{f\in\mathcal{F}}N(fSf;J)$, where
\begin{align*}
  N(fSf;J) & = \Stab_{N^*(fTf)}(\{VJ^i f\colon 0\leq i\leq c\}).
\end{align*}

\item Let $f\in F$, and suppose $f=\sum_{e'\sim e}e'$ for some $e\in \mathcal{E}$. Put $d_f=d_e$,
$\mathcal{O}_f=\mathcal{O}_e$, $r_f=|\{e'\in \mathcal{E}\colon e'\sim e\}$, and 
$d_f m_i(f)=\rank_{\mathcal{O}_e} VJ^i e/VJ^{i+1}$.  
Then
\begin{align*}
	\Stab_{N^*(fTf)}(UJ^if, fJ^i V) & = \left( \Pseudo(\bullet_e)\otimes_k 
		\prod_{j=0}^c \GL_{m_{i}(f)}(k)\right)\wr S_{r_f}.
\end{align*}
\end{enumerate}
\end{thm}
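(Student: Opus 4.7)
The plan is to mirror the proof of \thmref{thm:norm} while carefully tracking the involution at each step. The overall structure—base everything on a Wedderburn decomposition, factor the stabiliser of $J$ over the partition $\mathcal{F}$, and describe the remaining action via Skolem--Noether on simple blocks together with linear groups on multiplicity spaces—carries over, but in each step "Wedderburn", "Skolem--Noether", "unit group" and "linear group" must be replaced by their $*$-invariant analogues.

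For (i), the key point is that for a skew element $z\in J$ (so $z^*=-z$), the element $u_z:=z+\sqrt{1+z^2}$ is a unit of $A$: since $J$ is nilpotent, $1+z^2\in 1+J$ admits a unique square root in $1+J$ given by the terminating binomial series. A direct computation gives
\begin{equation*}
u_z^*\,u_z=(-z+\sqrt{1+z^2})(z+\sqrt{1+z^2})=1+z^2-z^2=1,
\end{equation*}
so $u_z$ is unitary and therefore lies in $N^*(A)$. In place of Mal'cev's theorem on transitivity of $1+J$ on Wedderburn complements, I would invoke Taft's refinement: the group generated by the $u_z$'s acts transitively on the set of $*$-invariant semisimple complements to $J$. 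Given $\varphi\in N^*(A)$, the complement $S^\varphi$ is also $*$-invariant, so some product $u$ of Cayley transforms satisfies $S^\varphi=S^u$, whence $u\varphi^{-1}\in N^*(S)\cap N^*(A)$. Part (ii) is then a direct check: conjugation by an element of $N^*(S)$ commutes with $*$, so preserving $J$ is equivalent to preserving both its symmetric and skew parts, and the converse follows exactly as in the proof of \thmref{thm:norm}(ii) using $A=S\oplus J$.

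For (iii), I would appeal to the $*$-simple structure theory from \cite{Wilson:unique-cent}*{Section 4.5}: each minimal $*$-ideal $eSe$ is $*$-isomorphic to $\Adj(\bullet_e)$ for a Hermitian form $\bullet_e$ of rank $d_e$ over an Osborn pseudo-division algebra $\mathcal{O}_e$, and its unitary subgroup $(eSe)^\#$ is exactly $\Isom(\bullet_e)$. The product of these isometry groups acts as inner $*$-automorphisms of $S$ fixing the $*$-decomposition of $A$, hence lies in $N^*(S)\cap N^*(A)$; the upper bound follows because the radical filtration is $A$-characteristic and therefore stabilised by any $*$-normaliser of $A$ that also normalises $S$. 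Parts (iv) and (v) are obtained by establishing a $*$-analogue of \lemref{lem:perms}: two $\sim$-equivalent $*$-invariant central-primitive idempotents can be interchanged by an element of $N^*(S;J)$ supported on a single pair, because the corresponding copies of $\Adj(\bullet_e)$ are $*$-isomorphic and the associated Hermitian modules on $X_i e$ have matching $\mathcal{O}_e$-ranks. This factors $N^*(S;J)$ over $\mathcal{F}$; within each factor, the $*$-Skolem--Noether theorem forces the induced action on $fSf$ to be by a pseudo-isometry of $\bullet_e$, tensored with a general linear action on the multiplicity spaces exactly as in \lemref{lem:final-stab}, and then wreathed with $S_{r_f}$.

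The main obstacle is part (i): executing the inductive lifting that shows the Cayley-type unitaries $u_z$ with $z\in J^-$ already suffice to conjugate any $*$-invariant Wedderburn complement onto $S$. The classical Mal'cev argument proceeds along the radical filtration using an element $1+z$ with $z\in J$; one has to refine the inductive step so that the perturbation $z$ at each stage can be chosen skew, which requires the characteristic hypothesis $k=2k$ and a careful interplay between the $*$-structure on $J/J^{i+1}$ and the chosen decomposition. Once part (i) is in place, parts (ii)--(v) are bookkeeping adaptations of the corresponding steps of \thmref{thm:norm}.
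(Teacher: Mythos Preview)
Your plan is correct and follows the paper's own proof essentially line for line. Two small remarks: the transitivity you flag as the ``main obstacle'' in part~(i) is precisely \cite{BW:find-isom}*{Theorem~1.1}, which the paper simply invokes rather than reproves, so no inductive lifting needs to be executed here; and in your sketch of~(ii), be careful that membership in $N^*(S)$ only guarantees that conjugation commutes with $*$ on $S$, not on $J$, so the equivalence between normalising $A$ and stabilising $J^+$ and $J^-$ \emph{separately} (rather than merely stabilising $J$) is exactly the content of the computation the paper carries out, not an automatic consequence.
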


\begin{proof}
For (i), let $\varphi\in N^*(A)$. Since $S^{\varphi}$ is a $*$-invariant complement to $J$ in $A$, and
$U=\{z+\sqrt{1+z^2}\colon z\in J^-\}$ acts transitively on the set of all such complements, 
there exists $u\in U$ such that
$S^{\varphi u}=S$~\cite{BW:find-isom}*{Theorem 1.1}. 
It follows that $\varphi u\in {\rm Stab}_{N^*(S)}(J^-)\cap{\rm Stab}_{N^*(S)}(J^+)$, and the result
follows.  

For (ii), note that $\varphi\in N^*(S)$ lies in $N^*(A)$ if, and only if, $\varphi$ stabilizes $J$
and commutes with the involution on $J$. The condition is equivalent to $\varphi$ stabilizing $J^+$ and $J^-$.
For, if $\varphi$ stabilizes $J^+$ and $J^-$, and $z=z^++z^-$ with $z^{\pm}\in J^{\pm}$, then
\[
(z^{\varphi})^*=((z^++z^-)^{\varphi})^*=((z^+)^{\varphi})^*+((z^-)^{\varphi})^*=(z^+)^{\varphi}-(z^-)^{\varphi}.
\]
On the other hand, if $z\in J^{\epsilon}$, say, with $z^{\varphi}\not\in J^{\epsilon}$, then $(z^*)^{\varphi}=\epsilon z^{\varphi}\neq
(z^{\varphi})^*$.  

For (iii)-(v) the proofs are essentially the same as that of~\thmref{thm:norm} except that 
$N^*(eSe)\cong\Pseudo(\bullet_e)\otimes \GL_{m_i(e)}(k)$, where 
$\bullet_e\colon \mathcal{O}_e^{d_e}\times \mathcal{O}_e^{d_e}\to \mathcal{O}_e$~\cite{Wilson:unique-cent}*{Corollary 4.30}.  
\end{proof}

\subsection{An algorithm to construct \boldmath$N^*(A)$}
\label{subsec:alg-norm-star}
Most of the machinery needed to provide an algorithmic version of~\thmref{thm:norm-star} was
developed in~\cite{BW:find-isom}.

First,
procedures for decomposing $S$ as a direct sum of minimal $*$-ideals, and for identifying
the simple type of these ideals, are given 
in~\cite[Theorem 4.1]{BW:find-isom}. 

The algorithm for~\thmref{thm:norm-star} is almost identical to its counterpart for~\thmref{thm:norm}.
The only essential difference is that, instead of generators for $\GL(d_i,K_i)$, we must choose
suitable generators for $\Pseudo(\bullet_e)$. Those groups are, however, all (conformal) classical groups, and
it is elementary to write down small generating sets for them (see~\cite[Section 5.4]{BW:find-isom}).

For (ii), an algorithmic version of Taft's decomposition is given in~\cite[Proposition 4.3]{BW:find-isom}. 
The unipotent radical $\{z+\sqrt{1+z^2}\colon z\in J^-\}$ is constructed in~\cite[Section 5.2]{BW:find-isom}
using a power series. Finally, the remarks we made about stabilizing the radical in 
Section~\ref{subsec:alg-norm} apply equally in this setting.

\smallskip
We conclude this section with an analogue of~\thmref{thm:norm-alg} for $*$-rings.

\begin{thm}
\label{thm:norm-star-alg}
There is a polynomial-time Las Vegas algorithm that, given a semisimple $*$-subalgebra, $A$, of $\End(V)$,
where $V$ is a finite-dimensional vector space over a finite field of odd characteristic, 
constructs generators for $N^*(A)$.
\end{thm}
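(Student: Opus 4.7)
The plan is to reduce the problem to assembling the structural description furnished by \thmref{thm:norm-star} and then to invoke existing subroutines, most of which are already assembled in \cite{BW:find-isom}. Because $A$ is semisimple, we have $J(A)=0$, so the ``unipotent'' generating set $\{z+\sqrt{1+z^2}\colon z\in J^-\}$ in \thmref{thm:norm-star}(i) is empty, and the constraints of \thmref{thm:norm-star}(ii) on stabilising $J^\pm$ are vacuous. Thus the task collapses to producing generators for the group described in parts (iii)--(v) with $S=A$.

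First I would apply \cite{BW:find-isom}*{Theorem~4.1} to decompose $A$ into its minimal $*$-ideals $eAe$ (one for each $*$-invariant central-primitive idempotent $e\in\mathcal{E}$) and, for each such ideal, to identify its $*$-simple type, namely the parameter $d_e$, the Osborn pseudo-division algebra $\mathcal{O}_e$, and an Hermitian $\mathcal{O}_e$-form $\bullet_e\colon \mathcal{O}_e^{d_e}\times\mathcal{O}_e^{d_e}\to \mathcal{O}_e$ with $eAe\cong\Adj(\bullet_e)$. Next, for each $e\in\mathcal{E}$, I would write down a small generating set for $\Pseudo(\bullet_e)$, which is a (conformal) classical group over $k$; explicit generators are collected in \cite{BW:find-isom}*{Section~5.4}. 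In parallel, a MeatAxe-style decomposition (cf.\ \citelist{\cite{HR:meataxe}\cite{IL:meataxe}}) of each homogeneous component $Ve$ exposes the factor $\mathcal{O}_e^{d_e}\otimes_k k^{m(e)}$ and hence the action of the $\GL_{m_i(e)}(k)$-factor appearing in \thmref{thm:norm-star}(v).

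It remains to realise the wreath product with $S_{r_f}$ in \thmref{thm:norm-star}(v). For this I would compute the equivalence relation $\sim$ on $\mathcal{E}$ by comparing $d_e$, the dimensional data $\dim X_i e$, and the $*$-isomorphism type of the Osborn pseudo-division algebras. Over a finite field of odd characteristic the possible $\mathcal{O}_e$ are essentially $k$, a quadratic extension equipped with the Galois involution, or a quaternion algebra with its standard involution \cite{Osborn}, all of which are recognised by their dimension and centre; an \emph{explicit} $*$-isomorphism between two equivalent summands is produced by the Kantor-style trick of factoring a minimal polynomial of a generator of one centre over the other and pulling back the involution, as in the proof of \thmref{thm:norm-alg}. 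Combining such an isomorphism with an $\mathcal{O}_e$-semilinear isometry between the Hermitian spaces $\bullet_e$ and $\bullet_{e'}$ (which exists precisely because the dimensions agree) yields the transposition generators that swap pairs of equivalent idempotents and hence generate $S_{r_f}$; by construction these transpositions lie in $N^*(A)$.

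Assembling the local classical-group generators, the $\GL_{m_i(f)}(k)$-generators, and the $S_{r_f}$-generators over all equivalence classes $f\in\mathcal{F}$ yields a generating set for $N^*(A)=N^*(S)$. Each step above runs in polynomial time Las Vegas: the $*$-ideal decomposition and type identification via \cite{BW:find-isom}*{Theorem~4.1}, the MeatAxe, and R\'{o}nyai-style factoring over $k$ all meet that bound, and the remaining combinatorics is clearly polynomial. The main obstacle is the step of producing an \emph{explicit} isometry between Hermitian spaces $(\mathcal{O}_e^{d_e},\bullet_e)$ and $(\mathcal{O}_{e'}^{d_{e'}},\bullet_{e'})$ that is compatible with the identified $*$-isomorphism $\mathcal{O}_e\to\mathcal{O}_{e'}$; this is the $*$-analogue of the semilinear conjugation used in \lemref{lem:perms}, and amounts to an effective version of Witt's theorem over a finite field, which is standard but must be handled with care so that the resulting element genuinely normalises $A$ and commutes correctly with the involution.
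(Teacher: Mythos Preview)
Your proposal is correct and follows essentially the same route as the paper: the discussion in Section~\ref{subsec:alg-norm-star} reduces the computation of $N^*(A)$ to the $*$-analogue of the algorithm in Section~\ref{subsec:alg-norm}, invoking \cite{BW:find-isom}*{Theorem~4.1} for the $*$-ideal decomposition and \cite{BW:find-isom}*{Section~5.4} for generators of the conformal classical groups $\Pseudo(\bullet_e)$, and your explicit observation that $J=0$ for semisimple $A$ makes the Taft/unipotent steps vacuous is precisely why the theorem is stated only in the semisimple case. Your treatment of the wreath-product transpositions via an effective Witt-type isometry is a bit more detailed than what the paper spells out, but it is the natural $*$-version of \lemref{lem:perms} and is implicit in the paper's claim that the algorithm is ``almost identical'' to that of Section~\ref{subsec:alg-norm}.
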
 

\section{Applications}
\label{sec:apps}
We conclude the paper with a brief discussion of several algorithmic problems of interest
whose solution relies on our ability to compute and understand $\Aut(\circ)$.

\subsection{Automorphisms of $p$-groups}\label{sec:p-groups}
The relationship between nilpotent groups and algebras extends back to the 1930's and has evolved
to handle ever larger families of groups; for a survey see \cite{Warfield:nil}*{Section 5}.
The typical method is to relate commutation $[x,y]=x^{-1} y^{-1} xy$
in a group $G$ to a distributive product.  
Through specific correspondences of Baer, and of Kaloujnine, Lazard, and Mal'cev,
automorphisms of $G$ are seen to induce autotopisms of a distributive product.
We make this precise for the more elementary setting: the
Baer correspondence \cite{Baer:class2}.  Further details are given in \cite{Wilson:unique-cent}*{Section 3}.
\smallskip 

Let $G$ be a group, $G'=\langle [x,y]=x^{-1} y^{-1} xy \colon x,y\in G\rangle$ its {\em commutator subgroup},
and $Z=Z(G)=\{x\in G\colon [x,G]=1\}$ its {\em center}. Suppose that $G'\leq Z$.
If $V=G/Z$ and $W=G'$, with operations written additively,
then $\circ\colon V\times V\to W$, with $xZ\circ yZ:=[x,y]$ for all $x,y\in G$,
is a well-defined bimap. 
Also, since $v\circ v=0$ for all $v\in V$, we see that $\circ$ is {\em alternating}. Hence
$\circ$ factors uniquely through 
$\wedge\colon V\times V\to V\wedge V=V\otimes V/\langle v\otimes v\colon v\in V\rangle$.

Each $\alpha\in\Aut(G)$ restricts to an automorphism $w^{\hat{\varphi}}=w\alpha$ on $W$, 
and induces an automorphism $(xZ)\varphi=x\alpha Z$ on $V$.  
Furthermore, the pair $(\varphi;\hat{\varphi})$ is a pseudo-isometry of $\circ$.
This establishes a homomorphism from $\Aut(G)$ to the group $\Pseudo(\circ)$ of  all pseudo-isometries of $\circ$.
In some important settings -- for example when $G^p=1$ for some prime $p$ -- the image of $\Aut(G)$
is all of $\Pseudo(\circ)$ \cite{Wilson:unique-cent}*{Proposition 3.8}.

In the absence of more refined strategies, $\Pseudo(\circ)$ is typically  
constructed ``by brute-force", meaning that one simply computes the stabilizer of $\ker\hat{\circ}$ 
under the natural action of $\GL(V)$
on $V\wedge V$, sending $u\wedge v\mapsto ug\wedge vg$, for $g\in \GL(V)$.
The limitations are obvious: 
the action of $\GL(V)$ on $V\wedge V$ can have orbits that are  
far too large for effective computation. Moreover, the results give no hint of structure. 

One way to finesse the problem is to factor $\circ$ through the possibly smaller space 
$V\wedge_{A} V$, where $A=\Adj(\circ)$.  This helps in two ways. First, the natural group 
that acts on $V\wedge_{A} V$, namely the group $\Pseudo(\wedge_A)$ of pseudo-isometries 
of the bimap $\wedge_A$, is no longer all of $\GL(V)$, and may be a significantly smaller subgroup. 
Second, the space $V\wedge_AV$ may have
much smaller dimension than $V\wedge V$.  Therefore finding $\Pseudo(\circ)$ as a stabilizer in
$\Pseudo(\wedge_A)$ of $\ker\hat{\circ}\leq V\otimes_A V$ will often be substantially easier.
Not surprisingly, this approach to computing $\Aut(G)\cong\Pseudo(\circ)$ 
is most effective in situations where $A=\Adj(\circ)$ is large or $V\wedge_A V$ is small.
Both of those desirable conditions are met, for instance, when $|G'|=p^2$,
a particularly nice case that is handled separately in \cite{BW:co-rank-2}. 
\smallskip

The general method we have outlined above constitutes one part of a comprehensive new
strategy to construct generators for the automorphism group of $p$-group of class
2 and exponent $p$. This strategy is currently being developed jointly by the authors 
and E.A. O'Brien~\cite{BOW:auto-pgrp}.

\subsection{Quadratic stabilizer}\label{sec:quad-stab}
Autotopism groups provide a natural context for the general problem of stabilizing a subspace of
rectangular matrices.

The familiar {\em linear stabilizer problem} starts with a field $k$, a positive integer $a$, and subspace $W\leq k^a$; and asks for $\Stab(W)=\{x\in \GL(a,k): Ux=U\}$.  By simply writing $k^a=X\oplus U$ we find that
\begin{align*}
	\Stab(W) & = \left\{\begin{bmatrix} A & B\\ 0 & C\end{bmatrix} : A\in \GL(X), B\in \Hom(X,U),C\in \GL(U)\right\}.\end{align*}
So this linear stabilizer problem is elementary to solve.

The {\em quadratic stabilizer problem} concerns a field $k$, positive integers $a,b$, and a subspace $W\leq \M_{a\times b}(k)$.  The goal is to describe the group 
\begin{align}
	\Stab(W) & = \{(x,y)\in \GL(a,k)\times \GL(b,k)~:~xWy^t=W\}.
\end{align}
The related {\em Hermitian stabilizer problem} has the tighter constraints that $a=b$ and that for all $w\in W$, $w=\varepsilon~\bar{w}^t$ for some $\varepsilon\in \{\pm 1\}$, and some (possibly identity) field automorphism $s\mapsto \bar{s}$ on $k$.  
The problem is then to describe the group
\begin{align}
	H\Stab(W) & = \{x\in \GL(a,k)~:~xW\bar{x}^t=W\}.
\end{align}
The quadratic and Hermitian stabilizer problems are known hard problems.  It is no surprise that
the reverse construction to Section~\ref{subsec:example} shows that the quadratic stabilizer problem is
the problem of constructing $\Aut(\circ)$. 

The introduction of tensor products (other than with $k$) is
new to the this topic.  Similar to the improvements made for automorphisms of $p$-groups in 
Section~\ref{sec:p-groups}, knowledge of
$\Aut(\otimes_S)$ reduces the work needed to compute $\Stab(W)$.

\bibliographystyle{amsplain}

\begin{bibdiv}
\begin{biblist}

\bib{Artin}{book}{
   author={Artin, E.},
   title={Geometric algebra},
   publisher={Interscience Publishers, Inc., New York-London},
   date={1957},
   pages={x+214},
   review={\MR{0082463 (18,553e)}},
}

\bib{Baer:class2}{article}{
   author={Baer, R.},
   title={Groups with abelian central quotient group},
   journal={Trans. Amer. Math. Soc.},
   volume={44},
   date={1938},
   number={3},
   pages={357--386},
   review={\MR{1501972}},
}

\bib{BOW:auto-pgrp}{article}{
  author={Brooksbank, P.A.}
  author={O'Brien, E.A.}
  author={Wilson, J.B.},
  title={Computing automorphism groups of $p$-groups},
  journal={in preparation},
}

\bib{BW:find-isom}{article}{
   author={Brooksbank, P.A.}
   author={Wilson, J.B.},
   title={Computing isometry groups of Hermitian maps},
   journal={Trans. Amer. Math. Soc.},
   volume={364},
   date={2012},
   pages={1975--1996}
}

\bib{BW:slope}{article}{
   author={Brooksbank, P.A.}
   author={Wilson, J.B.},
   title={Intersecting two classical groups},
   journal={J. Algebra},
   volume={353, no.1},
   date={2012},
   pages={286--297}
}

\bib{BW:co-rank-2}{article}{
  author={Brooksbank, P.A.}
  author={Wilson, J.B.},
  title={The nilpotent groups of co-rank 2},
  journal={preprint},
}

\bib{CI:algebra}{article}{
   author={Cohen, A.M.},
   author={Ivanyos, G.},
   author={Wales, D.B.},
   title={Finding the radical of an algebra of linear transformations},
   note={Algorithms for algebra (Eindhoven, 1996)},
   journal={J. Pure Appl. Algebra},
   volume={117/118},
   date={1997},
   pages={177--193},
   review={\MR{1457838 (98h:16026)}},
}

\bib{CR}{book}{
   author={Curtis, C.W.},
   author={Reiner, I.},
   title={Methods of representation theory. Vol. I},
   note={With applications to finite groups and orders;
   Pure and Applied Mathematics;
   A Wiley-Interscience Publication},
   publisher={John Wiley \& Sons Inc.},
   place={New York},
   date={1981},
   pages={xxi+819},
   review={\MR{632548 (82i:20001)}},
}

\bib{GALOIS}{book}{
   author={Davey, B.A.},
   author={Priestley, H.A.},
   title={Introduction to lattices and order},
   edition={2},
   publisher={Cambridge University Press},
   place={New York},
   date={2002},
   pages={xii+298},
   review={\MR{1902334 (2003e:06001)}},
}

\bib{EG:semisimple}{article}{
  author={Eberly, W.}
  author={Giesbrecht, M.},
  title={Efficient decomposition of associative algebras over finite fields},
  journal={J. Symbolic Comput.},
  volume={29},
  date={2000},
  pages={441--458}
}

\bib{Higman:enum}{article}{
   author={Higman, Graham},
   title={Enumerating $p$-groups. I. Inequalities},
   journal={Proc. London Math. Soc. (3)},
   volume={10},
   date={1960},
   pages={24--30},
   issn={0024-6115},
   review={\MR{0113948 (22 \#4779)}},
}

\bib{HR:meataxe}{article}{
  author={Holt, D.}
  author={Rees, S.}
  article={Testing modules for irreducibility}
  journal={J. Austral. Math. Soc. (Series A)}
  volume={57}
  date={1994}
  pages={1--16}
}

\bib{Iv:algebra}{article}{
   author={Ivanyos, G.},
   title={Fast randomized algorithms for the structure of matrix algebras
   over finite fields (extended abstract)},
   conference={
      title={Proceedings of the 2000 International Symposium on Symbolic and
      Algebraic Computation (St.\ Andrews)},
   },
   book={
      publisher={ACM},
      place={New York},
   },
   date={2000},
   pages={175--183 (electronic)},
   review={\MR{1805121}},
}

\bib{IL:meataxe}{article}{
   author={Ivanyos, G.},
   author={Lux, K.},
   title={Treating the exceptional cases of the MeatAxe},
   journal={Experiment. Math.},
   volume={9},
   date={2000},
   number={3},
   pages={373--381},
   review={\MR{1795309 (2001j:16067)}},
}

\bib{IR:quat}{article}{
   author={Ivanyos, G.},
   author={R{\'o}nyai, L.},
   title={Quaternion algebras},
   conference={
      title={Some tapas of computer algebra},
   },
   book={
      series={Algorithms Comput. Math.},
      volume={4},
      publisher={Springer},
      place={Berlin},
   },
   date={1999},
   pages={311--314},
   review={\MR{1679932}},
}

\bib{Jac:basicII}{book}{
   author={Jacobson, N.},
   title={Basic algebra. II},
   edition={2},
   publisher={W. H. Freeman and Company},
   place={New York},
   date={1989},
   pages={xviii+686},
   review={\MR{1009787 (90m:00007)}},
}

\bib{KL:aut-nil}{article}{
   author={Kuzucuoglu, Feride},
   author={Levchuk, Vladimir M.},
   title={The automorphism group of certain radical matrix rings},
   journal={J. Algebra},
   volume={243},
   date={2001},
   number={2},
   pages={473--485},
   review={\MR{1850642 (2002f:16066)}},
}

\bib{Lev:aut-nil}{article}{
   author={Lev{\v{c}}uk, V. M.},
   title={Automorphisms of certain nilpotent matrix groups and rings},
   language={Russian},
   journal={Dokl. Akad. Nauk SSSR},
   volume={222},
   date={1975},
   number={6},
   pages={1279--1282},
   review={\MR{0384956 (52 \#5826)}},
}
\bib{LW:invariants}{article}{
   author={Lewis, M.L.}
   author={Wilson, J.B.},
   title={Isomorphism in expanding families of indistinguishable groups},
   journal={Groups, Complexity, \& Cryptology},
	volume={4},
	year={2012},
	pages={73--110},
}

\bib{Luks}{article}{
   author={Luks, E.M.},
   title={Permutation groups and polynomial-time computation},
   conference={
      title={Groups and computation},
      address={New Brunswick, NJ},
      date={1991},
   },
   book={
      series={DIMACS Ser. Discrete Math. Theoret. Comput. Sci.},
      volume={11},
      publisher={Amer. Math. Soc.},
      place={Providence, RI},
   },
   date={1993},
   pages={139--175},
   review={\MR{1235801 (94h:20005)}},
}

\bib{Malcev}{article}{
   author={Malcev, A.},
   title={On the representation of an algebra as a direct sum of the radical
   and a semi-simple subalgebra},
   journal={C. R. (Doklady) Acad. Sci. URSS (N.S.)},
   volume={36},
   date={1942},
   pages={42--45},
   review={\MR{0007397 (4,130c)}},
}

\bib{Neretin}{article}{
   author={Neretin, Yu. A.},
   title={An estimate for the number of parameters defining an
   $n$-dimensional algebra},
   language={Russian},
   journal={Izv. Akad. Nauk SSSR Ser. Mat.},
   volume={51},
   date={1987},
   number={2},
   pages={306--318, 447},
   translation={
      journal={Math. USSR-Izv.},
      volume={30},
      date={1988},
      number={2},
      pages={283--294},
      issn={0025-5726},
   },
   review={\MR{896999 (88i:17001)}},
}

\bib{Osborn}{article}{
   author={Osborn, J. M.},
   title={Jordan algebras of capacity two},
   journal={Proc. Nat. Acad. Sci. U.S.A.},
   volume={57},
   date={1967},
   pages={582--588},
   review={\MR{0215892 (35 \#6727)}},
}

\bib{Ronyai}{article}{
   author={R{\'o}nyai, L.},
   title={Computations in associative algebras},
   conference={
      title={Groups and computation},
      address={New Brunswick, NJ},
      date={1991},
   },
   book={
      series={DIMACS Ser. Discrete Math. Theoret. Comput. Sci.},
      volume={11},
      publisher={Amer. Math. Soc.},
      place={Providence, RI},
   },
   date={1993},
   pages={221--243},
   review={\MR{1235805 (94g:68059)}},
}

\bib{Taft:complements}{article}{
  author={Taft, E.J.},
  title={Invariant Wedderburn Factors}
  journal={Illinois J. Math.}
  volume={1}
  date={1957}
  pages={565--573}
}

\bib{Warfield:nil}{book}{
   author={Warfield, R.B., Jr.},
   title={Nilpotent groups},
   series={Lecture Notes in Mathematics, Vol. 513},
   publisher={Springer-Verlag},
   place={Berlin},
   date={1976},
   pages={viii+115},
   review={\MR{0409661 (53 \#13413)}},
}

\bib{Wilson:unique-cent}{article}{
   author={Wilson, J.B.},
   title={Decomposing $p$-groups via Jordan algebras},
   journal={J. Algebra},
   volume={322},
   date={2009},
   number={8},
   pages={2642--2679},
   issn={0021-8693},
   review={\MR{2559855 (2010i:20016)}},
}

\bib{Wilson:div}{article}{
  author={Wilson, J.B.},
  title={Division, adjoints, and dualities of bilinear maps},
  journal={Comm. Alg.},
  volume={41},
  year={2013},
  doi={DOI 10.1080/00927872.2012.660668},
}

\end{biblist}
\end{bibdiv}

\end{document}